\documentclass[12pt,twoside]{amsart}
\usepackage{amsmath, amsthm, amscd, amsfonts, amssymb, graphicx}
\usepackage[bookmarksnumbered, plainpages]{hyperref}

\textwidth 16 cm \textheight 21 cm

\oddsidemargin 1.0cm \evensidemargin 1.0cm

\setcounter{page}{1}


\newtheorem{thm}{Theorem}[section]

\newtheorem{lem}[thm]{Lemma}
\newtheorem{prop}[thm]{Proposition}
\newtheorem{defn}[thm]{Definition}

\numberwithin{equation}{section}


\begin{document}

\title{\bf Affine connections and Gauss-Bonnet theorems in the Heisenberg group}

\author{Yong Wang}

\thanks{{\scriptsize
\hskip -0.4 true cm \textit{2010 Mathematics Subject Classification:}
53C40; 53C42.
\newline \textit{Key words and phrases:} Schouten-van Kampen affine connections; the adapted connection, Gauss-Bonnet theorem; sub-Riemannian limit,
Heisenberg group
\newline \textit{Corresponding author:} Yong Wang}}

\maketitle

\begin{abstract}
 In this paper, we compute sub-Riemannian limits of Gaussian curvature associated to two kinds of Schouten-Van Kampen affine connections and the adapted connection for a Euclidean $C^2$-smooth surface in the Heisenberg group away from
 characteristic points and signed geodesic curvature associated to two kinds of Schouten-Van Kampen affine connections and the adapted connection
 for Euclidean $C^2$-smooth curves on surfaces. We get Gauss-Bonnet theorems associated to two kinds of Schouten-Van Kampen affine connections in the Heisenberg group.

\end{abstract}

\vskip 0.2 true cm


\pagestyle{myheadings}
\markboth{\rightline {\scriptsize Wang}}
         {\leftline{\scriptsize Gauss Bonnet theorems}}

\bigskip
\bigskip


\section{ Introduction}
 \indent In \cite{DV}, Gaussian curvature for non-horizontal surfaces in sub-Riemannian Heisenberg space $\mathbb{H}^1$ was
defined and a Gauss-Bonnet theorem was proved. In \cite{BTV},\cite{BTV1}, Balogh-Tyson-Vecchi used a Riemannnian approximation scheme to define a notion of intrinsic
Gaussian curvature for a Euclidean $C^2$-smooth surface in the Heisenberg group $\mathbb{H}^1$ away
from characteristic points, and a notion of intrinsic signed geodesic curvature for Euclidean
$C^2$-smooth curves on surfaces. These results were then used to prove a Heisenberg version of
the Gauss-Bonnet theorem. In \cite{Ve}, Veloso verified that Gausssian curvature of surfaces and normal curvature of curves in surfaces introduced by \cite{DV} and by \cite{BTV} to prove Gauss-Bonnet theorems
in Heisenberg space $\mathbb{H}^1$ were unequal and he applied the same formalism of \cite{DV} to
get the curvatures of \cite{BTV}. With the obtained formulas, it is possible to prove the Gauss-Bonnet
theorem in \cite{BTV} as a straightforward application of the Stokes theorem.
In \cite{WW1}, we proved Gauss-Bonnet theorems for the affine group and the group of rigid motions of the Minkowski plane.
In \cite{WW2}, we obtained Gauss-Bonnet theorems for BCV spaces and the twisted Heisenberg group.\\
\indent In \cite{Kl}, Klatt proved a Gauss-Bonnet theorem associated to a metric connection (see Proposition 5.2 in \cite{Kl}).
When a Riemannian manifold has a splitting tangent bundle, we can define a Schouten-Van Kampen affine connection which is a metric connection.
In \cite{Be}, \cite{Ol}, Schouten-Van Kampen affine connections on foliations and almost (para) contact manifolds were studied.
 In \cite{DV}, in order to prove a Gauss-Bonnet theorem in the Heisenberg group, the adapted connection was introduced. The adapted connection is a metric connection.
  Motivated by above works, it is interesting to study Gauss-Bonnet theorems associated to Schouten-Van Kampen affine connections and the adapted connection in the Heisenberg group. Let $\mathcal{K}^{\Sigma,\nabla^1,\infty}$ and $k^{\infty,\nabla^1,s}_{\gamma_i,\Sigma}$ be the intrinsic
  Gauss curvature associated to the second kind of Schouten-Van Kampen affine connections and the intrinsic signed geodesic curvature associated to the second kind of Schouten-van Kampen affine connections.
  Our main theorem (Theorem 3.9) in this paper is as following: (see Section 3 for related definitions)
  \begin{thm}
 Let $\Sigma\subset (\mathbb{H},g_L)$
  be a regular surface with finitely many boundary components $(\partial\Sigma)_i,$ $i\in\{1,\cdots,n\}$, given by Euclidean $C^2$-smooth regular and closed curves $\gamma_i:[0,2\pi]\rightarrow (\partial\Sigma)_i$.
 Suppose that the characteristic set $C(\Sigma)$ satisfies $\mathcal{H}^1(C(\Sigma))=0$ and that
$||\nabla_Hu||_H^{-1}$ is locally summable with respect to the Euclidean $2$-dimensional Hausdorff measure
near the characteristic set $C(\Sigma)$, then
\begin{equation}
\int_{\Sigma}\mathcal{K}^{\Sigma,\nabla^1,\infty}d\sigma_{\Sigma}+\sum_{i=1}^n\int_{\gamma_i}k^{\infty,\nabla^1,s}_{\gamma_i,\Sigma}d{s}=0.
\end{equation}
\end{thm}
\indent In Section 2, we compute sub-Riemannian limits of Gaussian curvature associated to the first kind of Schouten-Van Kampen affine connections for a Euclidean $C^2$-smooth surface in the Heisenberg group away from
 characteristic points and signed geodesic curvature associated to the first kind of Schouten-Van Kampen affine connections
 for Euclidean $C^2$-smooth curves on surfaces. We get the Gauss-Bonnet theorem associated to the first kind of Schouten-Van Kampen affine connections in the Heisenberg group. In Section 3, we compute sub-Riemannian limits of Gaussian curvature associated to the second kind of Schouten-Van Kampen affine connections for a Euclidean $C^2$-smooth surface in the Heisenberg group away from
 characteristic points and signed geodesic curvature associated to the second kind of Schouten-Van Kampen affine connections
 for Euclidean $C^2$-smooth curves on surfaces. We get the Gauss-Bonnet theorem associated to the second kind of Schouten-Van Kampen affine connections in the Heisenberg group.
 In Section 4, we compute sub-Riemannian limits of Gaussian curvature associated to the adapted connection for a Euclidean $C^2$-smooth surface in the Heisenberg group away from characteristic points and signed geodesic curvature associated to the adapted connection
 for Euclidean $C^2$-smooth curves on surfaces.


\vskip 1 true cm

\section{The Gauss-Bonnet theorem associated to the first kind of Schouten-Van Kampen affine connections in the Heisenberg group}

Firstly we introduce some notations on the Heisenberg group. Let $\mathbb{H} $ be the Heisenberg group $\mathbb{R}^3$ where the non-commutative
group law is given by
$$(a,b,c)\star (x,y,z)=(a+x,b+y,c+z-\frac{1}{2}(xb-ya)).$$
Let
\begin{equation}
X_1=\partial_{x_1}-\frac{x_2}{2}\partial_{x_3}, ~~X_2=\partial_{x_2}+\frac{x_1}{2}\partial_{x_3},~~X_3=\partial_{x_3},
\end{equation}
and ${\rm span}\{X_1,X_2,X_3\}=T\mathbb{H}.$ Let $H={\rm span}\{X_1,X_2\}$ be the horizontal distribution on $\mathbb{H}$
Let $\omega_1=dx_1,~~\omega_2=dx_2,~~\omega_3=\omega=dx_3+\frac{1}{2}(x_2dx_1-x_1dx_2).$ For the constant $L>0$, let
$g_L=\omega_1\otimes \omega_1+\omega_2\otimes \omega_2+L\omega\otimes \omega$ be the Riemannian metric on $\mathbb{H}$. Then $X_1,X_2,\widetilde{X_3}:=L^{-\frac{1}{2}}X_3$ are orthonormal basis on $T\mathbb{H}$ with respect to $g_L$. We have
\begin{equation}
[X_1,X_2]=X_3,~~[X_2,X_3]=0,~~[X_1,X_3]=0.
\end{equation}
Let $\nabla^L$ be the Levi-Civita connection on $\mathbb{H}$ with respect to $g_L$. By Lemma 2.8 in \cite{BTV}, we have
\vskip 0.5 true cm
\begin{lem}
Let $\mathbb{H}$ be the Heisenberg group, then
\begin{align}
&\nabla^L_{X_j}X_j=0,~~~1\leq j\leq 3,~~~\nabla^L_{X_1}X_2=\frac{1}{2}X_3,~~~ \nabla^L_{X_2}X_1=-\frac{1}{2}X_3,\\
&\nabla^L_{X_1}X_3=-\frac{L}{2}X_2,~~\nabla^L_{X_3}X_1=-\frac{L}{2}X_2,\notag\\
& \nabla^L_{X_2}X_3=\nabla^L_{X_3}X_2=\frac{L}{2}X_1.\notag
\end{align}
\end{lem}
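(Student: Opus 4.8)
The plan is to compute $\nabla^L$ directly from the Koszul formula, exploiting the fact that $X_1,X_2,X_3$ form a global frame whose pairwise $g_L$-inner products are constant. For left-invariant vector fields $X,Y,Z$ the Koszul formula reads
\begin{equation}
2g_L(\nabla^L_XY,Z)=Xg_L(Y,Z)+Yg_L(X,Z)-Zg_L(X,Y)+g_L([X,Y],Z)-g_L([X,Z],Y)-g_L([Y,Z],X).
\end{equation}
First I would record the constant values of the metric on the frame: since $X_1,X_2,\widetilde{X_3}=L^{-1/2}X_3$ are $g_L$-orthonormal, one has $g_L(X_1,X_1)=g_L(X_2,X_2)=1$, $g_L(X_3,X_3)=L$, and all mixed inner products vanish. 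Because these six functions are constant on $\mathbb{H}$, the three derivative terms $Xg_L(Y,Z)$, $Yg_L(X,Z)$, $Zg_L(X,Y)$ drop out, so the Koszul formula collapses to the purely algebraic identity
\begin{equation}
2g_L(\nabla^L_XY,Z)=g_L([X,Y],Z)-g_L([X,Z],Y)-g_L([Y,Z],X).
\end{equation}

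Next I would feed in the bracket relations from (2.3), namely $[X_1,X_2]=X_3$ and $[X_1,X_3]=[X_2,X_3]=0$ together with their antisymmetric partners. For each ordered pair $(X_i,X_j)$ I would evaluate the right-hand side against the three test fields $Z=X_1,X_2,X_3$; this determines the three components of $\nabla^L_{X_i}X_j$ once I divide by the appropriate norm, where the only subtlety is that the $X_3$-component is recovered by dividing by $g_L(X_3,X_3)=L$ rather than by $1$. For instance, the single nonzero evaluation for $\nabla^L_{X_1}X_2$ occurs at $Z=X_3$, giving $2g_L(\nabla^L_{X_1}X_2,X_3)=g_L([X_1,X_2],X_3)=g_L(X_3,X_3)=L$, hence the $X_3$-component equals $L/(2L)=\tfrac12$ and $\nabla^L_{X_1}X_2=\tfrac12X_3$.

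This factor $L$ is what produces the asymmetry between the coefficients: derivatives landing in the horizontal directions $X_1,X_2$ pick up a full factor of $L$ from $g_L(X_3,X_3)$, which is exactly why $\nabla^L_{X_1}X_3=\nabla^L_{X_3}X_1=-\tfrac{L}{2}X_2$ and $\nabla^L_{X_2}X_3=\nabla^L_{X_3}X_2=\tfrac{L}{2}X_1$ carry the weight $L$, whereas $\nabla^L_{X_1}X_2=\tfrac12X_3$ and $\nabla^L_{X_2}X_1=-\tfrac12X_3$ do not. The main bookkeeping obstacle is keeping the placement of this $L$ consistent across all cases; everything else is the routine substitution of (2.3) into the reduced formula. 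As a final consistency check I would verify that the output is simultaneously torsion-free and metric: for the vanishing brackets $[X_1,X_3]=[X_2,X_3]=0$ the torsion-free condition forces precisely the symmetric pairs $\nabla^L_{X_i}X_3=\nabla^L_{X_3}X_i$ asserted in the statement, while for $[X_1,X_2]=X_3$ one checks $\nabla^L_{X_1}X_2-\nabla^L_{X_2}X_1=\tfrac12X_3-(-\tfrac12X_3)=X_3$, matching the bracket and confirming no computational slip.
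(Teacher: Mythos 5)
Your proposal is correct. The paper itself gives no proof of this lemma: it simply cites Lemma 2.8 of \cite{BTV}, so there is nothing in the text to compare against line by line. Your Koszul-formula computation is the standard argument underlying that citation, and I checked the details: with $g_L(X_1,X_1)=g_L(X_2,X_2)=1$, $g_L(X_3,X_3)=L$ and all mixed products zero, the derivative terms vanish and the reduced formula $2g_L(\nabla^L_XY,Z)=g_L([X,Y],Z)-g_L([X,Z],Y)-g_L([Y,Z],X)$ yields exactly the stated table. In particular $2g_L(\nabla^L_{X_1}X_2,X_3)=g_L(X_3,X_3)=L$ gives the coefficient $\tfrac12$ after dividing by $g_L(X_3,X_3)=L$, while $2g_L(\nabla^L_{X_1}X_3,X_2)=-g_L([X_1,X_2],X_3)=-L$ gives the coefficient $-\tfrac{L}{2}$ after dividing by $g_L(X_2,X_2)=1$; your explanation of where the asymmetric factor of $L$ comes from is exactly right, and the torsion/bracket consistency check at the end is a sound sanity test.
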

Let $H^{\bot}={\rm span}\{X_3\}$ and $P:T\mathbb{H}\rightarrow H$ and $P^{\bot}:T\mathbb{H}\rightarrow H^{\bot}$ be the projections. We define the first kind of Schouten-Van Kampen affine connections in the Heisenberg group:
\begin{equation}
\nabla_XY=P\nabla^L_XPY+P^{\bot}\nabla^L_XP^{\bot}Y.
\end{equation}
By Definition 3.1 in \cite{BTV}, we have
\begin{defn}
Let $\gamma:[a,b]\rightarrow (\mathbb{H},g_L)$ be a Euclidean $C^1$-smooth curve. We say that $\gamma$ is regular if $\dot{\gamma}\neq 0$ for every $t\in [a,b].$ Moreover we say that
$\gamma(t)$ is a horizontal point of $\gamma$ if
$$\omega(\dot{\gamma}(t))=\frac{\dot{\gamma}_2(t)}{\gamma_1(t)}-\dot{\gamma}_3(t)=0.$$
\end{defn}
\vskip 0.5 true cm
\begin{defn}
Let $\gamma:[a,b]\rightarrow (\mathbb{H},g_L)$ be a Euclidean $C^2$-smooth regular curve in the Riemannian manifold $(\mathbb{H},g_L)$. The curvature
$k^{L,\nabla}_{\gamma}$ associated to $\nabla$ of $\gamma$ at $\gamma(t)$ is defined as
\begin{equation}
k^{L,\nabla}_{\gamma}:=\sqrt{\frac{||\nabla_{\dot{\gamma}}{\dot{\gamma}}||_L^2}{||\dot{\gamma}||^4_L}-\frac{\langle \nabla_{\dot{\gamma}}{\dot{\gamma}},\dot{\gamma}\rangle^2_L}{||\dot{\gamma}||^6_L}}.
\end{equation}
\end{defn}
By (2.3) and (2.4), we have
\begin{lem}
Let $\mathbb{H}$ be the Heisenberg group, then
\begin{align}
\nabla_{X_3}X_1=-\frac{L}{2}X_2,~~
\nabla_{X_3}X_2=\frac{L}{2}X_1,~~\nabla_{X_j}X_k=0,~~ {\rm for~ other}~~ X_j,X_k.
\end{align}
\end{lem}
Let $\gamma(t)=(\gamma_1(t),\gamma_2(t),\gamma_3(t))$, then
\begin{equation}
\dot{\gamma}(t)={\dot{\gamma}_1}X_1+\dot{\gamma}_2X_2+\omega(\dot{\gamma}(t))X_3.
\end{equation}
By (2.6) and (2.7), we have
\begin{align}
\nabla_{\dot{\gamma}}\dot{\gamma}=
\left[\ddot{\gamma}_1+L\omega(\dot{\gamma}(t))\frac{\dot{\gamma}_2}{2}\right]X_1+
\left[\ddot{\gamma}_2-L\omega(\dot{\gamma}(t))\frac{\dot{\gamma}_1}{2}\right]X_2
+\frac{d}{dt}(\omega(\dot{\gamma}(t)))X_3.
\end{align}
Similar to Lemma 2.4 in \cite{WW1}, we have
\vskip 0.5 true cm
\begin{lem}
Let $\gamma:[a,b]\rightarrow (\mathbb{H},g_L)$ be a Euclidean $C^2$-smooth regular curve in the Riemannian manifold $(\mathbb{H},g_L)$. Then
\begin{align}
k^{L,\nabla}_{\gamma}&=\left\{\left\{\left[
\ddot{\gamma}_1+L\omega(\dot{\gamma}(t))\frac{\dot{\gamma}_2}{2}\right]^2+\left[
\ddot{\gamma}_2-L\omega(\dot{\gamma}(t))\frac{\dot{\gamma}_1}{2}
\right]^2+L\left[\frac{d}{dt}(\omega(\dot{\gamma}(t)))\right]^2\right\}\right.\\\notag
&\cdot\left[{\dot{\gamma}_1}^2+\dot{\gamma}_2^2+L(\omega(\dot{\gamma}(t)))^2\right]^{-2}\\\notag
&-\left\{{\dot{\gamma}_1}\left[
\ddot{\gamma}_1+L\omega(\dot{\gamma}(t))\frac{\dot{\gamma}_2}{2}\right]
+\dot{\gamma}_2\left[
\ddot{\gamma}_2-L\omega(\dot{\gamma}(t))\frac{\dot{\gamma}_1}{2}
\right]
+
L\omega(\dot{\gamma}(t))\frac{d}{dt}(\omega(\dot{\gamma}(t)))\right\}^2\\\notag
&\left.\cdot\left[{\dot{\gamma}_1}^2+\dot{\gamma}_2^2+L(\omega(\dot{\gamma}(t)))^2\right]^{-3}\right\}^{\frac{1}{2}}.\\\notag
\end{align}
In particular, if $\gamma(t)$ is a horizontal point of $\gamma$,
\begin{align}
k^{L,\nabla}_{\gamma}&=\left\{\left\{
\ddot{\gamma}_1^2+
\ddot{\gamma}_2^2+L\left[\frac{d}{dt}(\omega(\dot{\gamma}(t)))\right]^2\right\}
\cdot\left[{\dot{\gamma}_1}^2+\dot{\gamma}_2^2\right]^{-2}\right.\\\notag
&\left.-\left\{{\dot{\gamma}_1}
\ddot{\gamma}_1
+\dot{\gamma}_2
\ddot{\gamma}_2
\right\}^2\cdot\left[{\dot{\gamma}_1}^2+\dot{\gamma}_2^2\right]^{-3}\right\}^{\frac{1}{2}}.\\\notag
\end{align}
\end{lem}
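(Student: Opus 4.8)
The plan is to derive the formula for $k^{L,\nabla}_{\gamma}$ by directly substituting the expressions for $\nabla_{\dot{\gamma}}\dot{\gamma}$ and $\dot{\gamma}$ into the definition of the curvature given in Definition 2.3. The starting point is equation (2.9), which already supplies the decomposition of $\nabla_{\dot\gamma}\dot\gamma$ in the orthonormal-up-to-scaling basis $\{X_1,X_2,X_3\}$, together with equation (2.8) for $\dot\gamma$ itself. Since $X_1,X_2,\widetilde{X_3}=L^{-1/2}X_3$ form a $g_L$-orthonormal basis, the inner products and norms appearing in Definition 2.3 translate into coordinate sums where every $X_3$-component contributes an extra factor of $L$ (because $\langle X_3,X_3\rangle_L = L$).

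First I would compute $\|\nabla_{\dot\gamma}\dot\gamma\|_L^2$ by squaring the three bracketed coefficients from (2.9), weighting the $X_3$-coefficient $\frac{d}{dt}(\omega(\dot\gamma(t)))$ by $L$. This produces the first brace in the asserted formula. Next I would compute $\|\dot\gamma\|_L^4 = \left[\dot\gamma_1^2+\dot\gamma_2^2+L(\omega(\dot\gamma(t)))^2\right]^2$ from (2.8), again with the $L$-weight on the $X_3$-component, giving the exponent $-2$ factor. Then I would compute the inner product $\langle \nabla_{\dot\gamma}\dot\gamma,\dot\gamma\rangle_L$ by pairing (2.9) against (2.8) component-wise, where the $X_3\cdot X_3$ term again carries a factor $L$; squaring this yields the second brace, and dividing by $\|\dot\gamma\|_L^6 = \left[\dot\gamma_1^2+\dot\gamma_2^2+L(\omega(\dot\gamma(t)))^2\right]^3$ supplies the exponent $-3$ factor. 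Assembling these under the square root reproduces equation (2.10) exactly, which is why the lemma is flagged as being parallel to Lemma 2.4 in \cite{WW1}.

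Finally, for the special case I would set $\omega(\dot\gamma(t))=0$, which immediately kills the correction terms $L\omega(\dot\gamma(t))\frac{\dot\gamma_2}{2}$ and $L\omega(\dot\gamma(t))\frac{\dot\gamma_1}{2}$ in the first two coefficients of (2.9), and collapses the $L(\omega(\dot\gamma(t)))^2$ summands in the norm. One subtlety worth checking is that $\frac{d}{dt}(\omega(\dot\gamma(t)))$ need \emph{not} vanish at a horizontal point even though $\omega(\dot\gamma(t))$ does, so that term survives and keeps the factor $L$ in the reduced formula; this is the only place where care is required. The substitution is otherwise routine algebra, so there is no genuine obstacle here — the computation is mechanical once (2.8) and (2.9) are in hand, and the main point is simply tracking the $L$-weights on every $X_3$-component consistently.
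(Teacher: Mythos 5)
Your proposal is correct and is exactly the computation the paper intends: the paper gives no explicit proof, simply citing the analogous Lemma 2.4 of \cite{WW1}, and the argument there is precisely the direct substitution of (2.8) and (2.9) into Definition 2.3 with the $L$-weight on the $X_3$-component. Your observation that $\frac{d}{dt}(\omega(\dot{\gamma}(t)))$ survives at a horizontal point is the right subtlety to flag, and it matches the retained $L\left[\frac{d}{dt}(\omega(\dot{\gamma}(t)))\right]^2$ term in (2.11).
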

\vskip 0.5 true cm
\begin{defn}
Let $\gamma:[a,b]\rightarrow (\mathbb{H},g_L)$ be a Euclidean $C^2$-smooth regular curve in the Riemannian manifold $(\mathbb{H},g_L)$.
We define the intrinsic curvature associated to the connection $\nabla$, $k_{\gamma}^{\infty,\nabla}$ of $\gamma$ at $\gamma(t)$ to be
$$k_{\gamma}^{\infty,\nabla}:={\rm lim}_{L\rightarrow +\infty}k_{\gamma}^{L,\nabla},$$
if the limit exists.
\end{defn}
\indent We introduce the following notation: for continuous functions $f_1,f_2:(0,+\infty)\rightarrow \mathbb{R}$,
\begin{equation}
f_1(L)\sim f_2(L),~~as ~~L\rightarrow +\infty\Leftrightarrow {\rm lim}_{L\rightarrow +\infty}\frac{f_1(L)}{f_2(L)}=1.
\end{equation}
Similar to Lemma 2.6 in \cite{WW1}, we have
\vskip 0.5 true cm
\begin{lem}
Let $\gamma:[a,b]\rightarrow (\mathbb{H},g_L)$ be a Euclidean $C^2$-smooth regular curve in the Riemannian manifold $(\mathbb{H},g_L)$. Then
\begin{equation}
k_{\gamma}^{\infty,\nabla}=\frac{\sqrt{\dot{\gamma}_1^2+\dot{\gamma}_2^2}}{2|\omega(\dot{\gamma}(t))|},~~if ~~\omega(\dot{\gamma}(t))\neq 0,
\end{equation}
\begin{align}
k^{\infty,\nabla}_{\gamma}&=\frac{|\ddot{\gamma}_1\dot{\gamma}_2-\ddot{\gamma}_2\dot{\gamma}_1|}{(\dot{\gamma}_1^2+\dot{\gamma}_2^2)^{\frac{3}{2}}},
~~if ~~\omega(\dot{\gamma}(t))= 0 ~~and~~\frac{d}{dt}(\omega(\dot{\gamma}(t)))=0,\\\notag
\end{align}
\begin{equation}
{\rm lim}_{L\rightarrow +\infty}\frac{k_{\gamma}^{L,\nabla}}{\sqrt{L}}=\frac{|\frac{d}{dt}(\omega(\dot{\gamma}(t)))|}
{\dot{\gamma}_1^2+\dot{\gamma}_2^2},~~if ~~\omega(\dot{\gamma}(t))= 0
~~and~~\frac{d}{dt}(\omega(\dot{\gamma}(t)))\neq 0.
\end{equation}
\end{lem}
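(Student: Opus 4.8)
The plan is to read the three limits directly off the closed-form expression for $k^{L,\nabla}_\gamma$ recorded in (2.9) (equivalently, off the definition (2.5) together with the components of $\nabla_{\dot\gamma}\dot\gamma$ displayed in (2.8)), by retaining only the leading power of $L$ as $L\rightarrow+\infty$. Write $\omega:=\omega(\dot\gamma(t))$ and
\[
N(L):=\frac{\|\nabla_{\dot\gamma}\dot\gamma\|_L^2}{\|\dot\gamma\|_L^4}-\frac{\langle\nabla_{\dot\gamma}\dot\gamma,\dot\gamma\rangle_L^2}{\|\dot\gamma\|_L^6},
\]
so that $k^{L,\nabla}_\gamma=\sqrt{N(L)}$. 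Since $X_1,X_2$ are $g_L$-orthonormal and $\|X_3\|_L^2=L$, the relevant scalars are $\|\dot\gamma\|_L^2=\dot\gamma_1^2+\dot\gamma_2^2+L\omega^2$, $\|\nabla_{\dot\gamma}\dot\gamma\|_L^2=a_1^2+a_2^2+La_3^2$ and $\langle\nabla_{\dot\gamma}\dot\gamma,\dot\gamma\rangle_L=\dot\gamma_1 a_1+\dot\gamma_2 a_2+L\omega a_3$, where by (2.8) one has $a_1=\ddot\gamma_1+L\omega\dot\gamma_2/2$, $a_2=\ddot\gamma_2-L\omega\dot\gamma_1/2$ and $a_3=\frac{d}{dt}(\omega)$. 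I would treat the three cases by substituting these into $N(L)$ and discarding lower-order terms, in the sense of the asymptotic relation (2.11).

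In the first case $\omega\neq0$ one has $\|\dot\gamma\|_L^2\sim L\omega^2$, so the two denominators behave like $L^2\omega^4$ and $L^3\omega^6$. In the first term of $N(L)$ the dominant numerator contribution is $a_1^2+a_2^2\sim\frac14 L^2\omega^2(\dot\gamma_1^2+\dot\gamma_2^2)$, the summand $La_3^2$ being only $O(L)$, so the first term tends to $(\dot\gamma_1^2+\dot\gamma_2^2)/(4\omega^2)$. The decisive observation is that the two order-$L$ contributions to $\langle\nabla_{\dot\gamma}\dot\gamma,\dot\gamma\rangle_L$ cancel, since $\dot\gamma_1(L\omega\dot\gamma_2/2)+\dot\gamma_2(-L\omega\dot\gamma_1/2)=0$; hence this inner product is only $O(L)$, its square is $O(L^2)$, and the second term of $N(L)$ is $O(L^{-1})\rightarrow0$. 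Taking square roots gives (2.12).

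In the second case $\omega=0$ and $\frac{d}{dt}(\omega)=0$ all $L$-dependence drops out: $a_1=\ddot\gamma_1$, $a_2=\ddot\gamma_2$, $a_3=0$, and $\|\dot\gamma\|_L^2=\dot\gamma_1^2+\dot\gamma_2^2$. Then $N(L)$ is the constant expression displayed in (2.10), and applying the Lagrange identity $(\ddot\gamma_1^2+\ddot\gamma_2^2)(\dot\gamma_1^2+\dot\gamma_2^2)-(\dot\gamma_1\ddot\gamma_1+\dot\gamma_2\ddot\gamma_2)^2=(\ddot\gamma_1\dot\gamma_2-\ddot\gamma_2\dot\gamma_1)^2$ collapses it to (2.13). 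In the third case $\omega=0$ but $a_3=\frac{d}{dt}(\omega)\neq0$, again $\|\dot\gamma\|_L^2=\dot\gamma_1^2+\dot\gamma_2^2$ is $L$-independent while $\|\nabla_{\dot\gamma}\dot\gamma\|_L^2=\ddot\gamma_1^2+\ddot\gamma_2^2+La_3^2\sim La_3^2$; the inner product $\langle\nabla_{\dot\gamma}\dot\gamma,\dot\gamma\rangle_L=\dot\gamma_1\ddot\gamma_1+\dot\gamma_2\ddot\gamma_2$ stays bounded, so the second term of $N(L)$ is $O(1)$ while the first grows like $L$. Thus $N(L)\sim La_3^2/(\dot\gamma_1^2+\dot\gamma_2^2)^2$, and dividing $k^{L,\nabla}_\gamma=\sqrt{N(L)}$ by $\sqrt{L}$ yields (2.14).

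The only delicate point, which I would write out most carefully, is the order-of-growth bookkeeping in the first case: one must verify the cancellation of the leading $L$-terms in $\langle\nabla_{\dot\gamma}\dot\gamma,\dot\gamma\rangle_L$, since otherwise the second term of $N(L)$ would appear to contribute at the same order as the first and the finite limit would come out wrong. Everything else is routine substitution followed by retaining the leading power of $L$.
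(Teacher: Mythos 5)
Your proposal is correct and follows exactly the route the paper intends: the paper gives no written proof here (it only says ``Similar to Lemma 2.6 in \cite{WW1}''), and the standard argument in that reference is precisely your direct asymptotic expansion of the expression in (2.9), with the key cancellation of the order-$L$ terms in $\langle\nabla_{\dot\gamma}\dot\gamma,\dot\gamma\rangle_L$ and the Lagrange identity in the horizontal case. No gaps.
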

\indent We will say that a surface $\Sigma\subset(\mathbb{H},g_L)$ is regular if $\Sigma$ is a Euclidean $C^2$-smooth compact and oriented surface. In particular we will assume that there exists
a Euclidean $C^2$-smooth function $u:\mathbb{H}\rightarrow \mathbb{R}$ such that
$$\Sigma=\{(x_1,x_2,x_3)\in \mathbb{G}:u(x_1,x_2,x_3)=0\}$$
and $u_{x_1}\partial_{x_1}+u_{x_2}\partial_{x_2}+u_{x_3}\partial_{x_3}\neq 0.$ Let $\nabla_Hu=X_1(u)X_1+X_2(u)X_2.$
 A point $x\in\Sigma$ is called {\it characteristic} if $\nabla_Hu(x)=0$.
 We define the characteristic set $C(\Sigma):=\{x\in\Sigma|\nabla_Hu(x)=0\}.$
  Our computations will
be local and away from characteristic points of $\Sigma$. Let us define first
$$p:=X_1u,~~~~q:=X_2u ,~~{\rm and}~~r:=\widetilde{X}_3u.$$
We then define
\begin{align}
&l:=\sqrt{p^2+q^2},~~~~l_L:=\sqrt{p^2+q^2+r^2},~~~~\overline{p}:=\frac{p}{l},\\
&\overline{q}:=\frac{q}{l},~~~~
\overline{p_L}:=\frac{p}{l_L},~~~~\overline{q_L}:=\frac{q}{l_L},~~~~\overline{r_L}:=\frac{r}{l_L}.\notag
\end{align}
In particular, $\overline{p}^2+\overline{q}^2=1$. These functions are well defined at every non-characteristic point. Let
\begin{align}
v_L=\overline{p_L}X_1+\overline{q_L}X_2+\overline{r_L}\widetilde{X_3},~~~~e_1=\overline{q}X_1-\overline{p}X_2,~~~~
e_2=\overline{r_L}~~\overline{p}X_1+\overline{r_L}~~ \overline{q}X_2-\frac{l}{l_L}\widetilde{X_3},
\end{align}
then $v_L$ is the Riemannian unit normal vector to $\Sigma$ and $e_1,e_2$ are the orthonormal basis of $\Sigma$. On $T\Sigma$ we define a linear transformation $J_L:T\Sigma\rightarrow T\Sigma$ such that
\begin{equation}
J_L(e_1):=e_2;~~~~J_L(e_2):=-e_1.
\end{equation}
For every $U,V\in T\Sigma$, we define $\nabla^{\Sigma}_UV=\pi \nabla_UV$ where $\pi:T\mathbb{H}\rightarrow T\Sigma$ is the projection. Then $\nabla^{\Sigma}$ is the metric connection on $\Sigma$
with respect to the metric $g_L$.
By (2.8),(2.16), we have
\begin{equation}
\nabla^{\Sigma}_{\dot{\gamma}}\dot{\gamma}=\langle \nabla_{\dot{\gamma}}\dot{\gamma},e_1\rangle_Le_1+\langle \nabla_{\dot{\gamma}}\dot{\gamma},e_2\rangle_Le_2,
\end{equation}
we have
\begin{align}
\nabla^{\Sigma}_{\dot{\gamma}}\dot{\gamma}&=
\left\{\overline{q}\left[\ddot{\gamma}_1+L\omega(\dot{\gamma}(t))\frac{\dot{\gamma}_2}{2}\right]
-\overline{p}\left[\ddot{\gamma}_2-L\omega(\dot{\gamma}(t))\frac{\dot{\gamma}_1}{2}\right]
\right\}e_1\\\notag
&+\left\{\overline{r_L}~~\overline{p}\left[\ddot{\gamma}_1+L\omega(\dot{\gamma}(t))\frac{\dot{\gamma}_2}{2}\right]
\right.\\
&\left.\left.+\overline{r_L}~~\overline{q}\left[\ddot{\gamma}_2-L\omega(\dot{\gamma}(t))\frac{\dot{\gamma}_1}{2}\right]
-\frac{l}{l_L}L^{\frac{1}{2}}\frac{d}{dt}(\omega(\dot{\gamma}(t)))\right]\right\}e_2.\notag
\end{align}
\vskip 0.5 true cm
\begin{defn}
Let $\Sigma\subset(\mathbb{H},g_L)$ be a regular surface.
Let $\gamma:[a,b]\rightarrow \Sigma$ be a Euclidean $C^2$-smooth regular curve. The geodesic curvature associated to $\nabla$, $k^{L,\nabla}_{\gamma,\Sigma}$ of $\gamma$ at $\gamma(t)$ is defined as
\begin{equation}
k^{L,\nabla}_{\gamma,\Sigma}:=\sqrt{\frac{||\nabla^{\Sigma}_{\dot{\gamma}}{\dot{\gamma}}||_{\Sigma,L}^2}{||\dot{\gamma}||^4_{\Sigma,L}}-
\frac{\langle \nabla^{\Sigma}_{\dot{\gamma}}{\dot{\gamma}},\dot{\gamma}\rangle^2_{\Sigma,L}}{||\dot{\gamma}||^6_{\Sigma,L}}}.
\end{equation}
\end{defn}
\vskip 0.5 true cm
\begin{defn}
Let $\Sigma\subset(\mathbb{H},g_L)$ be a regular surface. Let $\gamma:[a,b]\rightarrow \Sigma$ be a Euclidean $C^2$-smooth regular curve.
We define the intrinsic geodesic curvature associated to $\nabla$, $k_{\gamma,\Sigma}^{\infty,\nabla}$ of $\gamma$ at $\gamma(t)$ to be
$$k_{\gamma,\Sigma}^{\infty,\nabla}:={\rm lim}_{L\rightarrow +\infty}k_{\gamma,\Sigma}^{L,\nabla},$$
if the limit exists.
\end{defn}
\vskip 0.5 true cm
Similar to Lemma 3.3 in \cite{WW1}, we have
\begin{lem}
Let $\Sigma\subset(\mathbb{H},g_L)$ be a regular surface.
Let $\gamma:[a,b]\rightarrow \Sigma$ be a Euclidean $C^2$-smooth regular curve. Then
\begin{equation}
k_{\gamma,\Sigma}^{\infty,\nabla}=\frac{|\overline{p}\dot{\gamma}_1+\overline{q}\dot{\gamma}_2|}{2|\omega(\dot{\gamma}(t))|},~~if ~~\omega(\dot{\gamma}(t))\neq 0,
\end{equation}
$$k^{\infty,\nabla}_{\gamma,\Sigma}=0
~~if ~~\omega(\dot{\gamma}(t))= 0, ~~and~~\frac{d}{dt}(\omega(\dot{\gamma}(t)))=0,$$
\begin{equation}
{\rm lim}_{L\rightarrow +\infty}\frac{k_{\gamma,\Sigma}^{L,\nabla}}{\sqrt{L}}=\frac{|\frac{d}{dt}(\omega(\dot{\gamma}(t)))|}
{\left(\overline{q}{\dot{\gamma}_1}-\overline{p}\dot{\gamma}_2\right)^2},~~if ~~\omega(\dot{\gamma}(t))= 0
~~and~~\frac{d}{dt}(\omega(\dot{\gamma}(t)))\neq 0.
\end{equation}
\end{lem}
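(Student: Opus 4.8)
The plan is to make the curvature formula of Definition 2.8 completely explicit and then read off the three limits. Writing the decomposition (2.19) as $\nabla^{\Sigma}_{\dot{\gamma}}\dot{\gamma}=Ae_1+Be_2$, where $A$ and $B$ are the two bracketed coefficients there, I would first use that $e_1,e_2$ form a $g_L$-orthonormal basis of $T\Sigma$: this gives $\|\nabla^{\Sigma}_{\dot{\gamma}}\dot{\gamma}\|_{\Sigma,L}^2=A^2+B^2$ with no cross terms. Resolving $\dot{\gamma}$ into the same frame from (2.7) and (2.16) yields $\langle\dot{\gamma},e_1\rangle_L=\overline{q}\dot{\gamma}_1-\overline{p}\dot{\gamma}_2$ and $\langle\dot{\gamma},e_2\rangle_L=\overline{r_L}(\overline{p}\dot{\gamma}_1+\overline{q}\dot{\gamma}_2)-L^{1/2}\tfrac{l}{l_L}\omega(\dot{\gamma})$, hence $\|\dot{\gamma}\|_{\Sigma,L}^2=\dot{\gamma}_1^2+\dot{\gamma}_2^2+L\,\omega(\dot{\gamma})^2$ and $\langle\nabla^{\Sigma}_{\dot{\gamma}}\dot{\gamma},\dot{\gamma}\rangle_{\Sigma,L}=A\langle\dot{\gamma},e_1\rangle_L+B\langle\dot{\gamma},e_2\rangle_L$. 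At this point $k^{L,\nabla}_{\gamma,\Sigma}$ is an explicit function of $L$.

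The step I expect to be decisive is to exploit that $\gamma$ lies on $\Sigma=\{u=0\}$. Differentiating $u(\gamma(t))\equiv 0$ and expressing the Euclidean partials through $p=X_1u$, $q=X_2u$, $X_3u$, I would obtain the constraint $p\dot{\gamma}_1+q\dot{\gamma}_2+(X_3u)\,\omega(\dot{\gamma})=0$, equivalently $\overline{p}\dot{\gamma}_1+\overline{q}\dot{\gamma}_2=-\tfrac{X_3u}{l}\,\omega(\dot{\gamma})$. This identity is the hidden mechanism behind the last two formulas; a naive asymptotic count of the coefficients does not by itself yield the vanishing in the second regime nor the denominator $(\overline{q}\dot{\gamma}_1-\overline{p}\dot{\gamma}_2)^2$ in the third. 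Alongside it I would record the elementary asymptotics $r=L^{-1/2}X_3u$, $\overline{r_L}=O(L^{-1/2})$, $l_L\to l$, $\overline{p_L}\to\overline{p}$, $\overline{q_L}\to\overline{q}$ as $L\to+\infty$.

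I would then split into the three regimes. If $\omega(\dot{\gamma})\neq 0$, the term $\tfrac{L}{2}\omega(\dot{\gamma})(\overline{p}\dot{\gamma}_1+\overline{q}\dot{\gamma}_2)$ makes $A=O(L)$ while $B=O(L^{1/2})$ and $\|\dot{\gamma}\|_{\Sigma,L}^2=O(L)$; thus $\|\nabla^{\Sigma}_{\dot{\gamma}}\dot{\gamma}\|_{\Sigma,L}^2/\|\dot{\gamma}\|_{\Sigma,L}^4\to(\overline{p}\dot{\gamma}_1+\overline{q}\dot{\gamma}_2)^2/(4\omega(\dot{\gamma})^2)$ while the subtracted quotient $\langle\nabla^{\Sigma}_{\dot{\gamma}}\dot{\gamma},\dot{\gamma}\rangle_{\Sigma,L}^2/\|\dot{\gamma}\|_{\Sigma,L}^6$ is $O(L^{-1})\to 0$, giving the first formula. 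If $\omega(\dot{\gamma})=0$ and $\tfrac{d}{dt}\omega(\dot{\gamma})=0$, then $A=\overline{q}\ddot{\gamma}_1-\overline{p}\ddot{\gamma}_2=O(1)$, $B=O(L^{-1/2})\to 0$, and the constraint forces $\overline{p}\dot{\gamma}_1+\overline{q}\dot{\gamma}_2=0$; combining the algebraic identity $\dot{\gamma}_1^2+\dot{\gamma}_2^2-(\overline{q}\dot{\gamma}_1-\overline{p}\dot{\gamma}_2)^2=(\overline{p}\dot{\gamma}_1+\overline{q}\dot{\gamma}_2)^2$ (which uses $\overline{p}^2+\overline{q}^2=1$) with this vanishing collapses the bracket under the square root to zero, so $k^{\infty,\nabla}_{\gamma,\Sigma}=0$.

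Finally, if $\omega(\dot{\gamma})=0$ but $\tfrac{d}{dt}\omega(\dot{\gamma})\neq 0$, the constraint again gives $\overline{p}\dot{\gamma}_1+\overline{q}\dot{\gamma}_2=0$, so that $\langle\dot{\gamma},e_2\rangle_L=0$ at $t$ and $A=O(1)$, whereas $B\sim -L^{1/2}\tfrac{d}{dt}\omega(\dot{\gamma})=O(L^{1/2})$. Consequently the subtracted quotient $\langle\nabla^{\Sigma}_{\dot{\gamma}}\dot{\gamma},\dot{\gamma}\rangle_{\Sigma,L}^2/\|\dot{\gamma}\|_{\Sigma,L}^6$ stays $O(1)$ while $\|\nabla^{\Sigma}_{\dot{\gamma}}\dot{\gamma}\|_{\Sigma,L}^2/\|\dot{\gamma}\|_{\Sigma,L}^4\sim L(\tfrac{d}{dt}\omega(\dot{\gamma}))^2/(\dot{\gamma}_1^2+\dot{\gamma}_2^2)^2$ grows linearly; dividing $k^{L,\nabla}_{\gamma,\Sigma}$ by $\sqrt{L}$, passing to the limit, and substituting $\dot{\gamma}_1^2+\dot{\gamma}_2^2=(\overline{q}\dot{\gamma}_1-\overline{p}\dot{\gamma}_2)^2$ (once more from $\overline{p}\dot{\gamma}_1+\overline{q}\dot{\gamma}_2=0$) produces the third formula. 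The only genuine care needed throughout is to distinguish terms that vanish exactly because of the surface constraint from those that merely vanish in the limit.
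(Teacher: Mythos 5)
Your proposal is correct and follows essentially the same route the paper intends (it defers to the analogous computation of Lemma 3.3 in \cite{WW1}): expand $\nabla^{\Sigma}_{\dot{\gamma}}\dot{\gamma}$ and $\dot{\gamma}$ in the orthonormal frame $e_1,e_2$, use the tangency constraint $p\dot{\gamma}_1+q\dot{\gamma}_2+(X_3u)\,\omega(\dot{\gamma})=0$ coming from $u(\gamma(t))\equiv 0$, and track the powers of $L$ in each regime. You are right that this constraint is the decisive identity behind the second and third formulas; your asymptotic bookkeeping in all three cases checks out.
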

\vskip 0.5 true cm
\begin{defn}
Let $\Sigma\subset(\mathbb{H},g_L)$ be a regular surface.
Let $\gamma:[a,b]\rightarrow \Sigma$ be a Euclidean $C^2$-smooth regular curve. The signed geodesic curvature associated to $\nabla$, $k^{L,\nabla,s}_{\gamma,\Sigma}$ of $\gamma$ at $\gamma(t)$ is defined as
\begin{equation}
k^{L,\nabla,s}_{\gamma,\Sigma}:=\frac{\langle \nabla^{\Sigma}_{\dot{\gamma}}{\dot{\gamma}},J_L(\dot{\gamma})\rangle_{\Sigma,L}}{||\dot{\gamma}||^3_{\Sigma,L}}.
\end{equation}
\end{defn}
\vskip 0.5 true cm
\begin{defn}
Let $\Sigma\subset(\mathbb{H},g_L)$ be a regular surface. Let $\gamma:[a,b]\rightarrow \Sigma$ be a Euclidean $C^2$-smooth regular curve.
We define the intrinsic geodesic curvature associated to $\nabla$, $k_{\gamma,\Sigma}^{\infty,\nabla,s}$ of $\gamma$ at the non-characteristic point $\gamma(t)$ to be
$$k_{\gamma,\Sigma}^{\infty,\nabla,s}:={\rm lim}_{L\rightarrow +\infty}k_{\gamma,\Sigma}^{L,\nabla,s},$$
if the limit exists.
\end{defn}
\vskip 0.5 true cm
Similar to Lemma 3.6 in \cite{WW1}, we have
\begin{lem}
Let $\Sigma\subset(\mathbb{H},g_L)$ be a regular surface.
Let $\gamma:[a,b]\rightarrow \Sigma$ be a Euclidean $C^2$-smooth regular curve. Then
\begin{equation}
k_{\gamma,\Sigma}^{\infty,\nabla,s}=\frac{\overline{p}\dot{\gamma}_1+\overline{q}\dot{\gamma}_2}{2|\omega(\dot{\gamma}(t))|},~~if ~~\omega(\dot{\gamma}(t))\neq 0,
\end{equation}
$$k^{\infty,\nabla,s}_{\gamma,\Sigma}=0
~~if ~~\omega(\dot{\gamma}(t))= 0, ~~and~~\frac{d}{dt}(\omega(\dot{\gamma}(t)))=0,$$
\begin{equation}
{\rm lim}_{L\rightarrow +\infty}\frac{k_{\gamma,\Sigma}^{L,\nabla,s}}{\sqrt{L}}=
\frac{(-\overline{q}{\dot{\gamma}_1}
+\overline{p}\dot{\gamma}_2)\frac{d}{dt}(\omega(\dot{\gamma}(t)))}{|\overline{q}{\dot{\gamma}_1}-\overline{p}\dot{\gamma}_2|^3},~~if ~~\omega(\dot{\gamma}(t))= 0
~~and~~\frac{d}{dt}(\omega(\dot{\gamma}(t)))\neq 0.
\end{equation}
\end{lem}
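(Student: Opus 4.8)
\emph{Proof proposal.} The plan is to evaluate the defining quotient (2.23) by decomposing every object into the orthonormal frame $\{e_1,e_2\}$ of $T\Sigma$ given in (2.16) and then reading off the behaviour of each coefficient as $L\to+\infty$. Since $\gamma$ lies on $\Sigma$ we have $u(\gamma(t))\equiv 0$, hence $\langle\dot\gamma,v_L\rangle_L=0$ and $\dot\gamma$ is genuinely tangent to $\Sigma$. Using (2.8) and (2.16) I would first write $\dot\gamma=a\,e_1+b\,e_2$ with $a:=\overline{q}\dot\gamma_1-\overline{p}\dot\gamma_2$ and $b:=\overline{r_L}(\overline{p}\dot\gamma_1+\overline{q}\dot\gamma_2)-\tfrac{l}{l_L}\sqrt{L}\,\omega(\dot\gamma)$. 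Applying the transformation (2.17) then gives $J_L(\dot\gamma)=a\,e_2-b\,e_1$, and $\|\dot\gamma\|_{\Sigma,L}^2=a^2+b^2$.

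Next I would use (2.19), writing $\nabla^\Sigma_{\dot\gamma}\dot\gamma=A\,e_1+B\,e_2$ where $A,B$ are exactly the $e_1$- and $e_2$-coefficients displayed there. Orthonormality of $\{e_1,e_2\}$ collapses the numerator of (2.23) to $\langle\nabla^\Sigma_{\dot\gamma}\dot\gamma,J_L(\dot\gamma)\rangle_L=Ba-Ab$, so that $k^{L,\nabla,s}_{\gamma,\Sigma}=(Ba-Ab)(a^2+b^2)^{-3/2}$. This reduces the lemma to an asymptotic analysis of the four scalars $a,b,A,B$ in $L$.

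The remaining work is the $L\to+\infty$ bookkeeping, for which the governing asymptotics are $l_L\to l$, $\tfrac{l}{l_L}\to 1$, and $\overline{r_L}=\tfrac{r}{l_L}\sim \tfrac{L^{-1/2}u_{x_3}}{l}$ (recall $r=\widetilde{X}_3u=L^{-1/2}u_{x_3}$), so that $\sqrt{L}\,\overline{r_L}$ tends to a finite nonzero limit while $\overline{r_L}\to 0$. I would then split into the three stated regimes. If $\omega(\dot\gamma)\neq 0$ the dominant balance is $b\sim-\sqrt{L}\,\omega(\dot\gamma)$, $A\sim\tfrac{L}{2}\omega(\dot\gamma)(\overline{p}\dot\gamma_1+\overline{q}\dot\gamma_2)$ and $B=O(\sqrt{L})$; the leading $L^{3/2}$ term of the numerator comes from $-Ab$, the denominator is $\sim L^{3/2}|\omega(\dot\gamma)|^3$, and the quotient converges to (2.24). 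If $\omega(\dot\gamma)=0$ and $\tfrac{d}{dt}\omega(\dot\gamma)=0$, then $b\to 0$ and $B\to 0$ while $a,A$ stay bounded, so the numerator $\to 0$ and the denominator $\to|a|^3\neq 0$ (here $a\neq0$ because at a non-characteristic point the unique horizontal tangent direction of $\Sigma$ is $e_1$), giving the vanishing limit. If instead $\tfrac{d}{dt}\omega(\dot\gamma)\neq 0$, then $B\sim-\sqrt{L}\,\tfrac{d}{dt}\omega(\dot\gamma)$ dominates, $Ab=O(L^{-1/2})$, the numerator $\sim-\sqrt{L}\,a\,\tfrac{d}{dt}\omega(\dot\gamma)$ and the denominator $\to|a|^3$, which yields (2.26) after substituting $a=\overline{q}\dot\gamma_1-\overline{p}\dot\gamma_2$.

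The main obstacle is precisely this order-counting: several terms of different powers of $\sqrt{L}$ compete inside $A$, $B$ and $b$, and one must check that in the first regime the surviving term of the numerator is the cross term $-Ab$ (order $L^{3/2}$) rather than $Ba$ (order $L^{1/2}$), and that the factors $\tfrac{l}{l_L}$ and $\sqrt{L}\,\overline{r_L}$ each contribute their correct finite limits. Apart from this the computation is routine and parallels Lemma 3.6 in \cite{WW1}.
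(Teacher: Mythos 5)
Your proposal is correct and follows exactly the route the paper intends: the paper gives no written proof here but defers to Lemma 3.6 of \cite{WW1}, whose argument is precisely your decomposition $\dot\gamma=a\,e_1+b\,e_2$, $\nabla^\Sigma_{\dot\gamma}\dot\gamma=A\,e_1+B\,e_2$ from (2.16)--(2.19), the reduction of (2.23) to $(Ba-Ab)(a^2+b^2)^{-3/2}$, and the case-by-case order counting in $\sqrt{L}$. Your identification of the dominant terms in each regime (including the justification that $a\neq0$ when $\omega(\dot\gamma)=0$ at a non-characteristic point) checks out against (2.24)--(2.26).
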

\indent In the following, we compute the sub-Riemannian limit of the Gaussian curvature associated to $\nabla$ of surfaces in the Heisenberg group. We define the {\it second fundamental form associated to $\nabla$}, $II^{\nabla,L}$ of the
embedding of $\Sigma$ into $(\mathbb{H},g_L)$:
\begin{equation}
II^{\nabla,L}=\left(
  \begin{array}{cc}
   \langle \nabla_{e_1}v_L,e_1)\rangle_{L},
    & \langle \nabla_{e_1}v_L,e_2)\rangle_{L} \\
   \langle \nabla_{e_2}v_L,e_1)\rangle_{L},
    & \langle \nabla_{e_2}v_L,e_2)\rangle_{L} \\
  \end{array}
\right).
\end{equation}
Similarly to Theorem 4.3 in \cite{CDPT}, we have
\vskip 0.5 true cm
\begin{thm} The second fundamental form $II^{\nabla,L}$ of the
embedding of $\Sigma$ into $(\mathbb{H},g_L)$ is given by
\begin{equation}
II^{\nabla,L}=\left(
  \begin{array}{cc}
   h_{11}, & h_{12}\\
   h_{21}, & h_{22}\\
  \end{array}
\right),
\end{equation}
where $$h_{11}= \frac{l}{l_L}[X_1(\overline{p})+X_2(\overline{q})],~~
    h_{12}=-\frac{l_L}{l}\langle e_1,\nabla_H(\overline{r_L})\rangle_L,$$
    $$h_{21}=-\frac{l_L}{l}\langle e_1,\nabla_H(\overline{r_L})\rangle_L-\frac{\sqrt{L}}{2}-\frac{\sqrt{L}}{2}r^2_L,$$
    $$h_{22}=-\frac{l^2}{l_L^2}\langle e_2,\nabla_H(\frac{r}{l})\rangle_L+\widetilde{X_3}(\overline{r_L}).$$
\end{thm}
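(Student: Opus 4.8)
The statement is a direct computation of the four entries $h_{ij}=\langle \nabla_{e_i}v_L,e_j\rangle_L$, and the plan is to expand everything in the $g_L$-orthonormal frame $\{X_1,X_2,\widetilde{X_3}\}$ so that each inner product reduces to reading off coordinates against the expansions of $e_1$ and $e_2$ in (2.16). Concretely, I would write $v_L=\overline{p_L}X_1+\overline{q_L}X_2+\overline{r_L}\widetilde{X_3}$ and apply the Leibniz rule $\nabla_{e_i}(fX_j)=e_i(f)X_j+f\nabla_{e_i}X_j$, reducing the problem to the connection coefficients of Lemma 2.4.

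First I would record the action of $\nabla$ on the rescaled frame. Since $\widetilde{X_3}=L^{-1/2}X_3$, equation (2.6) gives $\nabla_{\widetilde{X_3}}X_1=-\frac{\sqrt{L}}{2}X_2$ and $\nabla_{\widetilde{X_3}}X_2=\frac{\sqrt{L}}{2}X_1$, while every remaining $\nabla_{X_j}X_k$, and in particular all $\nabla_{\,\cdot\,}\widetilde{X_3}$, vanishes. The decisive consequence is that $\nabla_W X_j$ is nonzero only through the $\widetilde{X_3}$-component of $W$. Because $e_1=\overline{q}X_1-\overline{p}X_2$ is horizontal, no connection term appears and $\nabla_{e_1}v_L=e_1(\overline{p_L})X_1+e_1(\overline{q_L})X_2+e_1(\overline{r_L})\widetilde{X_3}$; whereas the vertical part $-\frac{l}{l_L}\widetilde{X_3}$ of $e_2$ generates the extra terms $-\overline{q_L}\frac{l}{l_L}\frac{\sqrt{L}}{2}X_1+\overline{p_L}\frac{l}{l_L}\frac{\sqrt{L}}{2}X_2$ inside $\nabla_{e_2}v_L$. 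This is exactly the structural difference from the Levi-Civita computation of Theorem 4.3 in \cite{CDPT}, and it is what forces the asymmetry $h_{21}\neq h_{12}$: the torsion of $\nabla$ survives only in the $e_2$-row.

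Next I would pair these against $e_1$ and $e_2$ and simplify, the main tools being the identities $\overline{p_L}=\frac{l}{l_L}\overline{p}$, $\overline{q_L}=\frac{l}{l_L}\overline{q}$, $\overline{p}^2+\overline{q}^2=1$ and $\frac{l^2}{l_L^2}+\overline{r_L}^2=1$, together with the commutation relations (2.2). For the $e_1$-row only derivative terms remain: $h_{11}=\overline{q}\,e_1(\overline{p_L})-\overline{p}\,e_1(\overline{q_L})$ collapses to $\frac{l}{l_L}[X_1(\overline{p})+X_2(\overline{q})]$, and $h_{12}=\overline{r_L}\overline{p}\,e_1(\overline{p_L})+\overline{r_L}\overline{q}\,e_1(\overline{q_L})-\frac{l}{l_L}e_1(\overline{r_L})$ collapses to $-\frac{l_L}{l}\langle e_1,\nabla_H(\overline{r_L})\rangle_L$. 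In the $e_2$-row the connection terms contribute the $\sqrt{L}$ part: in $h_{21}$ the factor $\overline{p}\,\overline{p_L}+\overline{q}\,\overline{q_L}=\frac{l}{l_L}$ produces $-\frac{\sqrt{L}}{2}\frac{l^2}{l_L^2}$, which after $\frac{l^2}{l_L^2}=1-\overline{r_L}^2$ yields the $\sqrt{L}$-proportional constants, whereas in $h_{22}$ the two $\sqrt{L}$ contributions cancel by antisymmetry, leaving the derivative part $-\frac{l^2}{l_L^2}\langle e_2,\nabla_H(\frac{r}{l})\rangle_L+\widetilde{X_3}(\overline{r_L})$.

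The main obstacle is purely the algebraic simplification of the derivative terms, i.e. showing that combinations such as $\overline{q}\,e_1(\overline{p_L})-\overline{p}\,e_1(\overline{q_L})$ and $\overline{r_L}\overline{p}\,e_2(\overline{p_L})+\overline{r_L}\overline{q}\,e_2(\overline{q_L})-\frac{l}{l_L}e_2(\overline{r_L})$ condense into the compact forms $\langle e_i,\nabla_H(\cdot)\rangle_L$ plus a single $\widetilde{X_3}$-derivative. This is carried out by differentiating the quotients $\overline{p_L}=p/l_L$, $\overline{q_L}=q/l_L$, $\overline{r_L}=r/l_L$ through $X_j(l_L)$ and $X_j(l)$ and repeatedly using $\overline{p}\,X_j(\overline{p})+\overline{q}\,X_j(\overline{q})=0$ (the derivative of $\overline{p}^2+\overline{q}^2=1$) to eliminate the spurious terms. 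No conceptual difficulty arises beyond this bookkeeping, so the computation, though lengthy, is routine once the frame derivatives of Lemma 2.4 are in hand.
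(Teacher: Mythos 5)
Your overall strategy (expand $v_L$ in the frame $\{X_1,X_2,\widetilde{X_3}\}$, apply the Leibniz rule, and read off the connection coefficients from Lemma 2.4) is legitimate and is genuinely different from the paper's proof, which never recomputes the derivative terms at all: the paper only evaluates the four differences $\langle(\nabla-\nabla^L)_{e_i}v_L,e_j\rangle_L$ (display (2.28)) and then imports the Levi-Civita second fundamental form wholesale from Theorem 4.3 of \cite{CDPT}. Your route is more self-contained but must re-derive the identities that \cite{CDPT} already supplies; your treatment of $h_{11}$, $h_{12}$ and $h_{22}$ along these lines is correct.

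However, your accounting of the $\sqrt{L}$-terms in $h_{21}$ is wrong, and if carried out as written it would not reproduce the stated formula. The torsion contribution you identify, coming from $\overline{p}\,\overline{p_L}+\overline{q}\,\overline{q_L}=\frac{l}{l_L}$, equals $-\frac{\sqrt{L}}{2}\frac{l^2}{l_L^2}=-\frac{\sqrt{L}}{2}+\frac{\sqrt{L}}{2}\overline{r_L}^2$, whereas the theorem asserts that the $\sqrt{L}$-part of $h_{21}$ is $-\frac{\sqrt{L}}{2}-\frac{\sqrt{L}}{2}\overline{r_L}^2$; the two differ by $\sqrt{L}\,\overline{r_L}^2$. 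The missing piece does not come from the connection coefficients at all: it hides inside the ``derivative'' term $\overline{q}\,e_2(\overline{p_L})-\overline{p}\,e_2(\overline{q_L})$, which, unlike the symmetric combinations you single out as the main obstacle, is antisymmetric in the indices and therefore picks up the commutator $X_1(X_2u)-X_2(X_1u)=[X_1,X_2]u=X_3u=\sqrt{L}\,r$. Working this out gives $\overline{q}\,e_2(\overline{p_L})-\overline{p}\,e_2(\overline{q_L})=-\frac{l_L}{l}\langle e_1,\nabla_H(\overline{r_L})\rangle_L-\sqrt{L}\,\overline{r_L}^2$, and only after adding this to $-\frac{\sqrt{L}}{2}+\frac{\sqrt{L}}{2}\overline{r_L}^2$ do you recover $h_{21}=-\frac{l_L}{l}\langle e_1,\nabla_H(\overline{r_L})\rangle_L-\frac{\sqrt{L}}{2}-\frac{\sqrt{L}}{2}\overline{r_L}^2$. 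You do list the commutation relations (2.2) among your tools, but your sketch explicitly attributes every $\sqrt{L}$-contribution in the $e_2$-row to the torsion of $\nabla$, which is false; this is exactly the step the paper sidesteps by quoting the symmetric Levi-Civita second fundamental form from \cite{CDPT} and computing only the difference tensor.
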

\begin{proof} By Theorem 4.3 in \cite{CDPT} and Lemma 2.4, we have
\begin{align}
&\langle \nabla_{e_1}v_L,e_1\rangle_{L}=\langle \nabla^L_{e_1}v_L,e_1\rangle_{L},~~
\langle \nabla_{e_1}v_L,e_2\rangle_{L}=\langle \nabla^L_{e_1}v_L,e_2\rangle_{L}+\frac{\sqrt{L}}{2},\\\notag
&\langle \nabla_{e_2}v_L,e_1\rangle_{L}=\langle \nabla^L_{e_2}v_L,e_1\rangle_{L}-\frac{\sqrt{L}}{2}r^2_L,~~
\langle \nabla_{e_2}v_L,e_2\rangle_{L}=\langle \nabla^L_{e_2}v_L,e_2\rangle_{L},\\\notag
\end{align}
By Theorem 4.3 in \cite{CDPT} and (2.28), we get this theorem.
\end{proof}
\vskip 0.5 true cm
\indent The mean curvature associated to $\nabla$, $\mathcal{H}_{\nabla,L}$ of $\Sigma$ is defined by
$$\mathcal{H}_{\nabla,L}:={\rm tr}(II^{\nabla,L}).$$
Define the curvature of a connection $\nabla$ by
\begin{equation}
R(X,Y)Z=\nabla_X\nabla_Y-\nabla_Y\nabla_X-\nabla_{[X,Y]}.
\end{equation}
Let
\begin{equation}
\mathcal{K}^{\Sigma,\nabla}(e_1,e_2)=-\langle R^{\Sigma}(e_1,e_2)e_1,e_2\rangle_{\Sigma,L},~~~~\mathcal{K}^{\nabla}(e_1,e_2)=-\langle R(e_1,e_2)e_1,e_2\rangle_L.
\end{equation}
By the Gauss equation (in fact the Gauss equation holds for any metric connections), we have
\begin{equation}
\mathcal{K}^{\Sigma,\nabla}(e_1,e_2)=\mathcal{K}^{\nabla}(e_1,e_2)+{\rm det}(II^{\nabla,L}).
\end{equation}
Similar to Proposition 3.8 in \cite{WW1}, we have
\vskip 0.5 true cm
\begin{prop} Away from characteristic points, the horizontal mean curvature associated to $\nabla$, $\mathcal{H}_{\nabla,\infty}$ of $\Sigma\subset\mathbb{H}$ is given by
\begin{equation}
\mathcal{H}_{\nabla,\infty}={\rm lim}_{L\rightarrow +\infty}\mathcal{H}_{\nabla,L}=X_1(\overline{p})+X_2(\overline{q}).
\end{equation}
\end{prop}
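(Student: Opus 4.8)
The plan is to read off the mean curvature directly from the second fundamental form computed in Theorem 2.14 and then pass to the sub-Riemannian limit summand by summand, exactly in the spirit of Proposition 3.8 in \cite{WW1}. Since $\mathcal{H}_{\nabla,L}:={\rm tr}(II^{\nabla,L})=h_{11}+h_{22}$, only the two diagonal entries enter; in particular the potentially divergent $\tfrac{\sqrt{L}}{2}$ and $\tfrac{\sqrt{L}}{2}r_L^2$ terms sit in the off-diagonal positions $h_{12},h_{21}$ and never contribute. Thus I would write
$$\mathcal{H}_{\nabla,L}=\frac{l}{l_L}\big[X_1(\overline{p})+X_2(\overline{q})\big]-\frac{l^2}{l_L^2}\big\langle e_2,\nabla_H(\tfrac{r}{l})\big\rangle_L+\widetilde{X_3}(\overline{r_L}),$$
so that the whole statement reduces to understanding the $L$-asymptotics of these three pieces at a fixed non-characteristic point.

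The decisive input is the behaviour of $r$. Because $r=\widetilde{X_3}u=L^{-1/2}X_3u$, at a non-characteristic point (where $l=\sqrt{p^2+q^2}\neq 0$) one has $r=O(L^{-1/2})\to 0$, hence $l_L=\sqrt{p^2+q^2+r^2}\to l$ and $\overline{r_L}=r/l_L\to 0$ as $L\to+\infty$. By contrast $p,q,l$, and therefore $\overline{p}=p/l$ and $\overline{q}=q/l$, are all independent of $L$. These two observations govern the entire limit.

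With this, the first summand is immediate: the bracket $X_1(\overline{p})+X_2(\overline{q})$ is $L$-independent and the prefactor $l/l_L\to 1$, so it converges to $X_1(\overline{p})+X_2(\overline{q})$, which is precisely the claimed value. It then remains to show the other two summands vanish. For the third, $\widetilde{X_3}(\overline{r_L})=L^{-1}X_3\!\big(\tfrac{X_3u}{l_L}\big)$, and since $u\in C^2$ and $l_L\geq l>0$, the quantity $X_3(\tfrac{X_3u}{l_L})$ stays bounded as $L\to+\infty$, so the factor $L^{-1}$ drives it to $0$. For the second, I would note that $\nabla_H(\tfrac{r}{l})=L^{-1/2}\big[X_1(\tfrac{X_3u}{l})X_1+X_2(\tfrac{X_3u}{l})X_2\big]$ is purely horizontal and of size $O(L^{-1/2})$, while the horizontal part of $e_2$, namely $\overline{r_L}\,\overline{p}X_1+\overline{r_L}\,\overline{q}X_2$, is also $O(L^{-1/2})$; pairing them (and using that $X_1,X_2$ are $g_L$-orthonormal and orthogonal to $\widetilde{X_3}$) gives $\langle e_2,\nabla_H(\tfrac{r}{l})\rangle_L=O(L^{-1})$, which the bounded factor $l^2/l_L^2$ cannot save. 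Hence both error terms tend to $0$ and $\mathcal{H}_{\nabla,\infty}=X_1(\overline{p})+X_2(\overline{q})$.

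The only real obstacle is the bookkeeping of the hidden powers of $L^{-1/2}$ carried by $\widetilde{X_3}$, $r$, and $\overline{r_L}$, together with the verification that the geometric factors they multiply remain bounded near (but away from) the characteristic set, where $l\neq 0$. Once these asymptotics are pinned down the three limits are routine, and the key structural point — that the $\sqrt{L}$-divergent contributions are confined to the off-diagonal entries and therefore drop out of the trace — is what makes the limit finite and clean.
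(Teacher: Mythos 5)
Your proof is correct and follows essentially the same route as the paper, which simply invokes the analogous computation of Proposition 3.8 in \cite{WW1}: take the trace of $II^{\nabla,L}$ from Theorem 2.14 (so the $\sqrt{L}$-divergent terms, which in fact all sit in $h_{21}$, drop out) and use $r=L^{-1/2}X_3u=O(L^{-1/2})$, $l_L\to l$, $\overline{r_L}\to 0$ to kill the remaining two summands of $h_{22}$. Your bookkeeping of the hidden powers of $L^{-1/2}$ is the substance of the omitted argument, and it is done correctly.
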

\vskip 0.5 true cm
By Lemma 2.4 and (2.29), we have
\vskip 0.5 true cm
\begin{lem}
Let $\mathbb{H}$ be the Heisenberg group, then
\begin{align}
R(X_1,X_2)X_1=\frac{L}{2}X_2,~~~ R(X_1,X_2)X_2=-\frac{L}{2}X_1,
~~~ R(X_i,X_j)X_k=0,~~{\rm for~ other}~ i,j,k.\notag
\end{align}
\end{lem}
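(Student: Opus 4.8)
The plan is to compute the curvature tensor $R$ of the first-kind Schouten--Van Kampen connection $\nabla$ directly from its defining action on the orthonormal basis, as recorded in Lemma 2.4. Since the statement only asserts the values of $R(X_i,X_j)X_k$, the entire argument reduces to a finite bookkeeping exercise using the definition $R(X,Y)Z=\nabla_X\nabla_Y Z-\nabla_Y\nabla_X Z-\nabla_{[X,Y]}Z$ together with the bracket relations (2.3). First I would recall from Lemma 2.4 that the only nonzero connection coefficients are $\nabla_{X_3}X_1=-\tfrac{L}{2}X_2$ and $\nabla_{X_3}X_2=\tfrac{L}{2}X_1$, while $\nabla_{X_j}X_k=0$ for every other pair. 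This is a drastic simplification compared to the Levi-Civita case, because $\nabla$ kills all derivatives in the horizontal directions $X_1,X_2$ and only sees derivatives in the $X_3$ direction.

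Next I would organize the verification by the pair $(X_i,X_j)$. For the pair $(X_1,X_2)$, I compute each term: $\nabla_{X_1}\nabla_{X_2}Z$ and $\nabla_{X_2}\nabla_{X_1}Z$ both vanish for every basis element $Z$, since $\nabla_{X_2}Z$ and $\nabla_{X_1}Z$ are themselves zero (the only surviving covariant derivatives are those in the $X_3$-slot). Hence the only contribution to $R(X_1,X_2)Z$ comes from the bracket term $-\nabla_{[X_1,X_2]}Z=-\nabla_{X_3}Z$, using $[X_1,X_2]=X_3$. Evaluating this at $Z=X_1$ gives $-\nabla_{X_3}X_1=\tfrac{L}{2}X_2$, and at $Z=X_2$ gives $-\nabla_{X_3}X_2=-\tfrac{L}{2}X_1$, which are precisely the two stated nonzero values. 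For the pairs $(X_1,X_3)$ and $(X_2,X_3)$, the brackets $[X_1,X_3]$ and $[X_2,X_3]$ vanish by (2.3), so the bracket term drops out, and I would check that the two remaining second-covariant-derivative terms cancel; here one must be mildly careful because $\nabla_{X_3}$ is nontrivial, but applying $\nabla_{X_1}$ or $\nabla_{X_2}$ on the outside annihilates everything, so both compositions vanish and $R=0$ for these pairs as well.

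The only place demanding genuine attention — the main obstacle, such as it is — is the pair involving $X_3$ twice in a composition, namely making sure I have not overlooked a term of the form $\nabla_{X_3}\nabla_{X_i}X_k$ where the inner derivative produces an $X_3$-direction that the outer $\nabla_{X_3}$ could then act on nontrivially. Tracing this through, the inner derivatives $\nabla_{X_i}X_k$ land in $\mathrm{span}\{X_1,X_2\}$ (never in the $X_3$-direction, as $\nabla$ has no $X_3$-component in its output for these basis pairs), so the composition structure stays within the regime where only the bracket term can survive. I would therefore present the computation compactly as three short cases keyed to the three brackets in (2.3), conclude the two nonzero values for the $(X_1,X_2)$ pair and vanishing otherwise, and remark that the result matches the Levi-Civita curvature values of the analogous computation, which serves as a consistency check since $\nabla$ and $\nabla^L$ agree on the relevant $X_3$-derivatives.
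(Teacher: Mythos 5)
Your computation is correct and is exactly what the paper intends: Lemma 2.16 is stated with no written proof beyond the citation of Lemma 2.4 and the curvature definition (2.29), and your case-by-case evaluation (all second-covariant-derivative terms killed by the outer $\nabla_{X_1}$ or $\nabla_{X_2}$, leaving only $-\nabla_{[X_1,X_2]}=-\nabla_{X_3}$ for the one nonvanishing bracket) is the direct verification the paper is implicitly invoking. One caveat: your closing ``consistency check'' is false --- the Levi-Civita curvature is \emph{not} the same, e.g.\ $R^L(X_1,X_2)X_1=\nabla^L_{X_1}(-\tfrac{1}{2}X_3)+\tfrac{L}{2}X_2=\tfrac{3L}{4}X_2\neq\tfrac{L}{2}X_2$, because $\nabla^L_{X_1}X_2$ and $\nabla^L_{X_2}X_1$ do not vanish; drop that remark, as the rest of the argument stands on its own.
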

\vskip 0.5 true cm
\begin{prop} Away from characteristic points, we have
\begin{equation}
\mathcal{K}^{\Sigma,\nabla}(e_1,e_2)\rightarrow \mathcal{K}^{\Sigma,\nabla,\infty}+O(\frac{1}{\sqrt{L}}),~~{\rm as}~~L\rightarrow +\infty,
\end{equation}
where
\begin{equation}
\mathcal{K}^{\Sigma,\nabla,\infty}:=-\frac{1}{2}\langle e_1,\nabla_H(\frac{X_3u}{|\nabla_Hu|})\rangle
-\frac{(X_3u)^2}{2(p^2+q^2)}.
\end{equation}
\end{prop}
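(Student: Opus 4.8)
The plan is to use the Gauss equation (2.34) together with the asymptotic analysis of its two summands as $L\to+\infty$. By (2.34) we have $\mathcal{K}^{\Sigma,\nabla}(e_1,e_2)=\mathcal{K}^{\nabla}(e_1,e_2)+\det(II^{\nabla,L})$, so the task reduces to computing the limits of each piece separately. First I would handle the intrinsic term $\mathcal{K}^{\nabla}(e_1,e_2)=-\langle R(e_1,e_2)e_1,e_2\rangle_L$. Expanding $e_1=\overline{q}X_1-\overline{p}X_2$ and $e_2=\overline{r_L}\,\overline{p}X_1+\overline{r_L}\,\overline{q}X_2-\tfrac{l}{l_L}\widetilde{X_3}$ in terms of the basis, and using the curvature values from Lemma 2.13 (only $R(X_1,X_2)X_1=\tfrac{L}{2}X_2$ and $R(X_1,X_2)X_2=-\tfrac{L}{2}X_1$ survive), I would extract the coefficient of $X_1,X_2$ in $R(e_1,e_2)e_1$ and pair it against $e_2$. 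The key observation is that $\overline{r_L}=r/l_L\sim r L^{-1/2}/l$ decays like $L^{-1/2}$ as $L\to+\infty$ (since $r=\widetilde{X}_3u=L^{-1/2}X_3u$ and $l_L\to l$), which kills the naive factor of $L$ and leaves a finite limit for $\mathcal{K}^{\nabla}(e_1,e_2)$.

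Next I would compute $\det(II^{\nabla,L})=h_{11}h_{22}-h_{12}h_{21}$ using the explicit entries from Theorem 2.8. The entries $h_{11}$ and $h_{22}$ involve $\overline{r_L}$ or $\widetilde{X_3}(\overline{r_L})$ and hence carry decaying factors, while $h_{12}=-\tfrac{l_L}{l}\langle e_1,\nabla_H(\overline{r_L})\rangle_L$ and $h_{21}=h_{12}-\tfrac{\sqrt{L}}{2}(1+r_L^2)$ contain the dominant $-\sqrt{L}/2$ growth. Forming the product $h_{12}h_{21}$, the cross term between the $\langle e_1,\nabla_H(\overline{r_L})\rangle_L$ part (which, after inserting $\overline{r_L}\sim L^{-1/2}X_3u/l$, behaves like $L^{-1/2}$ times $\langle e_1,\nabla_H(X_3u/l)\rangle$) and the $-\sqrt{L}/2$ factor produces a finite limit, and this is precisely where the term $-\tfrac{1}{2}\langle e_1,\nabla_H(\tfrac{X_3u}{|\nabla_Hu|})\rangle$ emerges (recalling $l=|\nabla_Hu|$). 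The remaining pieces of the product are either $O(L^{-1/2})$ or cancel.

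The step I expect to be the main obstacle is the bookkeeping of the various $L$-powers so that all terms genuinely of lower order are identified as $O(1/\sqrt{L})$ while the finite-limit contributions from $\mathcal{K}^{\nabla}$ and from $\det(II^{\nabla,L})$ combine correctly. In particular one must be careful that the two separate $-\tfrac{(X_3u)^2}{2(p^2+q^2)}$-type contributions — one coming from the curvature term and one from the determinant — add up to the single coefficient $-\tfrac{(X_3u)^2}{2(p^2+q^2)}$ stated in (2.36), rather than doubling or canceling. I would organize this by writing each $h_{ij}$ and each curvature coefficient as an explicit power series in $L^{-1/2}$ near $L=+\infty$ (using $l_L=\sqrt{l^2+r^2}$ and $r=L^{-1/2}X_3u$), multiply out to isolate the $L^0$ coefficient, and discard everything of order $L^{-1/2}$ and below. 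Once the leading-order coefficient of each summand is in hand, summing them yields (2.36), and collecting the subleading remainder as $O(1/\sqrt{L})$ establishes (2.35). This is routine but term-heavy, so the care lies entirely in the asymptotic accounting, mirroring the argument of Proposition 3.8 in \cite{WW1}.
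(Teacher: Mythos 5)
Your proposal follows essentially the same route as the paper: apply the Gauss equation (2.31), compute $\mathcal{K}^{\nabla}(e_1,e_2)=-\tfrac{L}{2}\overline{r_L}^2\to-\tfrac{(X_3u)^2}{2(p^2+q^2)}$ from the curvature lemma, and extract the $L^0$ term of $\det(II^{\nabla,L})$ from the entries of Theorem 2.14, where only the product of $h_{12}\sim -L^{-1/2}\langle e_1,\nabla_H(X_3u/l)\rangle$ with the $-\tfrac{\sqrt{L}}{2}$ part of $h_{21}$ survives. Your flagged worry about a doubled $(X_3u)^2$ term does not materialize --- the determinant contributes only the gradient term at order $L^0$ --- so the bookkeeping closes exactly as you describe and the argument is correct.
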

\begin{proof} By Lemma 2.16 and similar to (3.33) and (3.34) in \cite{WW1}, we have
\begin{align}
\mathcal{K}^{\nabla}(e_1,e_2)=-\frac{L}{2}\overline{r_L}^2.
\end{align}
By Theorem 2.14, (2.31) and (2.35), similar to Proposition 3.10 in \cite{WW1}, we can obtain this proposition.
\end{proof}
Let us first consider the case of a regular curve $\gamma:[a,b]\rightarrow (\mathbb{H},g_L)$. We define the Riemannian length measure
$ds_L=||\dot{\gamma}||_Ldt.$
By \cite{BTV},
we have
\begin{equation}
\frac{1}{\sqrt{L}}ds_L\rightarrow ds:=|\omega(\dot{\gamma}(t))|dt ~~{\rm as}~~L\rightarrow +\infty.
\end{equation}
\begin{equation}
\frac{1}{\sqrt{L}}e^*_1\wedge e^*_2\rightarrow d\sigma_\Sigma:=\overline{p}\omega_2\wedge \omega_3-\overline{q}\omega_1\wedge \omega_3 ~~{\rm as}~~L\rightarrow +\infty,
\end{equation}
where $e^*_1,e^*_2$ are the dual basis of $e_1,e_2$.
We recall the local Gauss-Bonnet theorem for the metric connection(see Proposition 5.2 in \cite{Kl}).
\begin{thm}
 Let $\Sigma$
  be an oriented compact two-dimensional manifold with many boundary components $(\partial\Sigma)_i,$ $i\in\{1,\cdots,n\}$, given by Euclidean $C^2$-smooth regular and closed curves $\gamma_i:[0,2\pi]\rightarrow (\partial\Sigma)_i$.
 Let $\nabla$ be a metric connection and $\mathcal{K}^{\nabla}$ be the Gauss curvature associated to $\nabla$ and $k^{s,\nabla}_{\gamma_i}$
 be the signed geodesic curvature associated to $\nabla$, then
\begin{equation}
\int_{\Sigma}\mathcal{K}^{\nabla}d\sigma_{\Sigma}+\sum_{i=1}^n\int_{\gamma_i}k^{s,\nabla}_{\gamma_i}d{s}=2\pi \chi(M).
\end{equation}
\end{thm}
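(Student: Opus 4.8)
The plan is to reduce the statement to the classical Gauss-Bonnet theorem for the Levi-Civita connection, by observing that on an oriented Riemannian surface \emph{both} integrands are governed entirely by a single connection $1$-form and that the torsion of $\nabla$ enters neither of them. Fix a positively oriented local orthonormal frame $\{e_1,e_2\}$ on $\Sigma$ with dual coframe $\{\theta^1,\theta^2\}$, so that $d\sigma_\Sigma=\theta^1\wedge\theta^2$. Since $\nabla$ is a metric connection, its connection matrix is skew-symmetric; in dimension two this means it is completely encoded by the single $1$-form $\omega_{12}$ defined by $\nabla e_1=\omega_{12}\otimes e_2$ and $\nabla e_2=-\omega_{12}\otimes e_1$.

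First I would rewrite the Gauss curvature in terms of $\omega_{12}$. Computing $R(e_1,e_2)e_1$ directly and using only skew-symmetry, the quadratic term $\omega_{1k}\wedge\omega_{k2}$ vanishes (the diagonal entries are zero and there is only one off-diagonal form), so that $\langle R(e_1,e_2)e_1,e_2\rangle_L=d\omega_{12}(e_1,e_2)$, whence $\mathcal{K}^{\nabla}\,d\sigma_\Sigma=-d\omega_{12}$. Next I would rewrite the signed geodesic curvature. Writing a unit-speed $\gamma$ as $\dot\gamma=\cos\phi\,e_1+\sin\phi\,e_2$ for an angle function $\phi$ and using $\nabla_{\dot\gamma}e_1=\omega_{12}(\dot\gamma)e_2$, $\nabla_{\dot\gamma}e_2=-\omega_{12}(\dot\gamma)e_1$, one finds $\nabla^\Sigma_{\dot\gamma}\dot\gamma=\bigl(\dot\phi+\omega_{12}(\dot\gamma)\bigr)J(\dot\gamma)$, where $J$ denotes the rotation by $\pi/2$ in the oriented tangent plane; hence, by the definition of the signed geodesic curvature as $\langle\nabla^\Sigma_{\dot\gamma}\dot\gamma,J\dot\gamma\rangle/\|\dot\gamma\|^3$ (Definition 2.11), the boundary integrand is $k^{s,\nabla}_{\gamma}\,ds=d\phi+\omega_{12}(\dot\gamma)\,dt$. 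The decisive observation is that neither expression involves the torsion of $\nabla$: both are formally identical to their Levi-Civita counterparts, with $\omega_{12}$ playing the role of the Riemannian connection form.

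With these two identities in hand, the proof reduces to the classical argument, which I would carry out as follows. Triangulate $\Sigma$ compatibly with the boundary curves $\gamma_i$, choose a local orthonormal frame on each triangle, and apply Stokes' theorem in the form $\int_T(-d\omega_{12})=-\oint_{\partial T}\omega_{12}$ on every triangle $T$. Summing over the triangulation, the interior edges cancel in pairs up to the jumps of the angle function between adjacent frames; the boundary edges assemble into $\sum_i\int_{\gamma_i}k^{s,\nabla}_{\gamma_i}\,ds$ via the geodesic-curvature identity; and the terms $\oint d\phi$ together with the exterior angles at the vertices combine, through the Hopf Umlaufsatz and Euler's formula $V-E+F=\chi(\Sigma)$, to produce $2\pi\chi(\Sigma)$.

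I expect the main obstacle to be purely topological and combinatorial rather than analytic: organizing the transition between the frames chosen on overlapping triangles, tracking the exterior angles at the vertices, and verifying that these rotation terms sum to $2\pi\chi(\Sigma)$. This bookkeeping is exactly the same as in the Riemannian Gauss-Bonnet theorem, and once one has checked that $\omega_{12}$ alone controls both integrands, no difficulty specific to the presence of torsion arises; the result then follows as in Proposition 5.2 of \cite{Kl}.
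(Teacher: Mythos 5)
Your proof is correct, and it fills a gap the paper leaves open: the paper does not prove this statement at all, but simply quotes it as Proposition 5.2 of \cite{Kl}, so there is no internal argument to compare against. Your key observation is exactly the right one: metricity alone forces the connection matrix of $\nabla$ in an oriented orthonormal frame to be skew, hence encoded by the single $1$-form $\omega_{12}$; the quadratic term in the curvature then drops out, giving $\mathcal{K}^{\nabla}\,d\sigma_\Sigma=-d\omega_{12}$ (consistent with the paper's sign convention $\mathcal{K}^{\nabla}=-\langle R(e_1,e_2)e_1,e_2\rangle$), while the signed geodesic curvature of a unit-speed curve is $\dot\phi+\omega_{12}(\dot\gamma)$. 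Since torsion never enters either formula, the classical Stokes--Umlaufsatz--Euler bookkeeping applies verbatim and yields $2\pi\chi(\Sigma)$. The only cosmetic point worth tidying is that the theorem is stated for an abstract oriented compact surface, so the connection you differentiate $\dot\gamma$ with is $\nabla$ itself rather than an induced $\nabla^{\Sigma}$; the computation is unaffected.
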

By Lemma 2.13 and Proposition 2.17 and Theorem 2.18, similar to the proof of Theorem 1.1 in \cite{BTV}, we have

 \begin{thm}
 Let $\Sigma\subset (\mathbb{H},g_L)$
  be a regular surface with finitely many boundary components $(\partial\Sigma)_i,$ $i\in\{1,\cdots,n\}$, given by Euclidean $C^2$-smooth regular and closed curves $\gamma_i:[0,2\pi]\rightarrow (\partial\Sigma)_i$.
 Suppose that the characteristic set $C(\Sigma)$ satisfies $\mathcal{H}^1(C(\Sigma))=0$ and that
$||\nabla_Hu||_H^{-1}$ is locally summable with respect to the Euclidean $2$-dimensional Hausdorff measure
near the characteristic set $C(\Sigma)$, then
\begin{equation}
\int_{\Sigma}\mathcal{K}^{\Sigma,\nabla,\infty}d\sigma_{\Sigma}+\sum_{i=1}^n\int_{\gamma_i}k^{\infty,\nabla,s}_{\gamma_i,\Sigma}d{s}=0.
\end{equation}
\end{thm}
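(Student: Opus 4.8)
The plan is to run the Riemannian Gauss-Bonnet theorem of Theorem 2.18 on each approximant $(\mathbb{H},g_L)$ and then let $L\to+\infty$, exactly along the lines of the proof of Theorem 1.1 in \cite{BTV}. The first Schouten-Van Kampen connection $\nabla$ defined in (2.4) is a metric connection on $(\mathbb{H},g_L)$, so the induced connection $\nabla^\Sigma$ on $\Sigma$ (given by $\nabla^\Sigma_UV=\pi\nabla_UV$) is again a metric connection. Hence Theorem 2.18 applies for every fixed $L>0$ and yields
\[
\int_{\Sigma}\mathcal{K}^{\Sigma,\nabla}(e_1,e_2)\,dA_L+\sum_{i=1}^n\int_{\gamma_i}k^{L,\nabla,s}_{\gamma_i,\Sigma}\,ds_L=2\pi\chi(\Sigma),
\]
where $dA_L=e_1^*\wedge e_2^*$ is the Riemannian area element of $\Sigma$ and $ds_L=\|\dot\gamma\|_L\,dt$ is the Riemannian length element along $\gamma_i$.

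Next I would divide this identity by $\sqrt{L}$ and pass to the limit. The right-hand side becomes $2\pi\chi(\Sigma)/\sqrt{L}\to 0$, which accounts for the vanishing right-hand side in the statement. For the area term I would rewrite
\[
\frac{1}{\sqrt{L}}\int_{\Sigma}\mathcal{K}^{\Sigma,\nabla}(e_1,e_2)\,dA_L=\int_{\Sigma}\mathcal{K}^{\Sigma,\nabla}(e_1,e_2)\,\frac{1}{\sqrt{L}}\,e_1^*\wedge e_2^*,
\]
and combine the pointwise convergence $\mathcal{K}^{\Sigma,\nabla}(e_1,e_2)\to\mathcal{K}^{\Sigma,\nabla,\infty}$ of Proposition 2.17 with the convergence $\frac{1}{\sqrt{L}}e_1^*\wedge e_2^*\to d\sigma_\Sigma$ of (2.39); this identifies the limit of the area term as $\int_{\Sigma}\mathcal{K}^{\Sigma,\nabla,\infty}\,d\sigma_\Sigma$. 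For each boundary component I would use $\frac{1}{\sqrt{L}}ds_L\to ds=|\omega(\dot\gamma(t))|\,dt$ of (2.38) together with Lemma 2.13: at every point with $\omega(\dot\gamma(t))\neq0$ the signed geodesic curvature $k^{L,\nabla,s}_{\gamma_i,\Sigma}$ converges to the finite intrinsic limit $k^{\infty,\nabla,s}_{\gamma_i,\Sigma}$, so the boundary sum tends to $\sum_{i=1}^n\int_{\gamma_i}k^{\infty,\nabla,s}_{\gamma_i,\Sigma}\,ds$. Assembling the three limits gives the claimed identity.

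The hard part is justifying the interchange of limit and integral, and this is precisely where the hypotheses $\mathcal{H}^1(C(\Sigma))=0$ and the local summability of $\|\nabla_Hu\|_H^{-1}$ are used, following \cite{BTV}. Away from the characteristic set the convergences above are locally uniform, so the only danger is the behavior near $C(\Sigma)$: the limit integrand $\mathcal{K}^{\Sigma,\nabla,\infty}$ carries the singular factor $(p^2+q^2)^{-1}=\|\nabla_Hu\|_H^{-2}$ visible in (2.35), and one must dominate $\mathcal{K}^{\Sigma,\nabla}(e_1,e_2)\,\frac{1}{\sqrt{L}}dA_L$ by an $L$-independent integrable form. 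The summability assumption supplies such a dominating function, while $\mathcal{H}^1(C(\Sigma))=0$ renders the characteristic set negligible for $d\sigma_\Sigma$, so a dominated-convergence argument applies as in \cite{BTV}. A parallel issue occurs on the boundary curves at the isolated horizontal points where $\omega(\dot\gamma(t))=0$: there $ds=|\omega(\dot\gamma(t))|\,dt$ vanishes, and although $k^{L,\nabla,s}_{\gamma_i,\Sigma}$ may blow up like $\sqrt{L}$ by the third case of Lemma 2.13, the product $k^{L,\nabla,s}_{\gamma_i,\Sigma}\,\frac{1}{\sqrt{L}}ds_L$ remains integrable and contributes nothing in the limit. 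Once these two convergences are secured, the proof is complete.
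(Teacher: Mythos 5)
Your proposal follows exactly the paper's route: the paper proves Theorem 2.19 by invoking the metric-connection Gauss--Bonnet theorem (Theorem 2.18) for each fixed $L$, dividing by $\sqrt{L}$, and passing to the limit via Proposition 2.17, Lemma 2.13, and the convergences (2.38)--(2.39), with the integrability issues near $C(\Sigma)$ and at horizontal boundary points handled ``as in the proof of Theorem 1.1 in \cite{BTV}'' --- which is precisely the dominated-convergence argument you describe. Your write-up is in fact more detailed than the paper's one-line justification, and I see no gap relative to what the paper itself supplies.
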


By Lemma 2.13 and (2.34), we note that Theorem 2.19 is the same as Theorem 1.1 in \cite{BTV} up to the scaler $\frac{1}{2}$.

\section{The Gauss-Bonnet theorem associated to the second kind of Schouten-van Kampen affine connections in the Heisenberg group}

Let $\overline{H}={\rm span}\{X_2,X_3\}$ and $\overline{H}^{\bot}={\rm span}\{X_1\}$ and $\overline{P}:T\mathbb{H}\rightarrow \overline{H}$ and $\overline{P}^{\bot}:T\mathbb{H}\rightarrow \overline{H}^{\bot}$ be the projections. We define the second kind of Schouten-van Kampen affine connections in the Heisenberg group:
\begin{equation}
\nabla^1_XY=\overline{P}\nabla^L_X\overline{P}Y+\overline{P}^{\bot}\nabla^L_X\overline{P}^{\bot}Y.
\end{equation}
By Lemma 2.1 and (3.1), we have
\begin{lem}
Let $\mathbb{H}$ be the Heisenberg group, then
\begin{align}
\nabla^1_{X_1}X_2=\frac{1}{2}X_3,~~
\nabla^1_{X_1}X_3=-\frac{L}{2}X_2,~~\nabla^1_{X_j}X_k=0,~~ {\rm for~ other}~~ X_j,X_k.
\end{align}
\end{lem}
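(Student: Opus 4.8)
The plan is to evaluate the defining formula (3.1) on every ordered pair of frame vectors $(X_j,X_k)$ and read off each value from Lemma 2.1; the argument is entirely mechanical once one records how the two projections act on the frame. First I would note that, since $\overline{H}=\mathrm{span}\{X_2,X_3\}$ and $\overline{H}^{\bot}=\mathrm{span}\{X_1\}$, the projections satisfy
\begin{equation}
\overline{P}X_1=0,\quad \overline{P}X_2=X_2,\quad \overline{P}X_3=X_3,\quad \overline{P}^{\bot}X_1=X_1,\quad \overline{P}^{\bot}X_2=\overline{P}^{\bot}X_3=0.
\notag
\end{equation}
Consequently, in each instance of (3.1) at most one of the two summands can survive: the summand $\overline{P}\nabla^L_{X_j}\overline{P}X_k$ contributes only when $k\in\{2,3\}$, while $\overline{P}^{\bot}\nabla^L_{X_j}\overline{P}^{\bot}X_k$ contributes only when $k=1$.

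Next I would run through the cases that produce the two asserted nonzero coefficients. For $k\in\{2,3\}$ the formula collapses to $\nabla^1_{X_j}X_k=\overline{P}\nabla^L_{X_j}X_k$, so I substitute the Levi-Civita values from Lemma 2.1 and apply $\overline{P}$. This gives immediately $\nabla^1_{X_1}X_2=\overline{P}(\tfrac12 X_3)=\tfrac12 X_3$ and $\nabla^1_{X_1}X_3=\overline{P}(-\tfrac{L}{2}X_2)=-\tfrac{L}{2}X_2$. For $k=1$ the formula collapses to $\nabla^1_{X_j}X_1=\overline{P}^{\bot}\nabla^L_{X_j}X_1$.

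The only point requiring attention — and it is really the content of the lemma — is to check that the remaining Levi-Civita coefficients, although nonzero, are annihilated by the outer projection. Concretely, $\nabla^L_{X_2}X_1=-\tfrac12 X_3$ and $\nabla^L_{X_3}X_1=-\tfrac{L}{2}X_2$ both lie in $\overline{H}$, so $\overline{P}^{\bot}$ kills them and $\nabla^1_{X_2}X_1=\nabla^1_{X_3}X_1=0$; dually $\nabla^L_{X_2}X_3=\nabla^L_{X_3}X_2=\tfrac{L}{2}X_1$ lies in $\overline{H}^{\bot}$, so $\overline{P}$ kills it and $\nabla^1_{X_2}X_3=\nabla^1_{X_3}X_2=0$. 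The diagonal terms vanish already because $\nabla^L_{X_j}X_j=0$. Collecting the surviving contributions reproduces exactly the stated table, so no genuine obstacle arises beyond this bookkeeping: the substance of the lemma is simply that adapting the Levi-Civita connection to the splitting $T\mathbb{H}=\overline{H}\oplus\overline{H}^{\bot}$ discards precisely the off-diagonal terms that cross between $\overline{H}$ and $\overline{H}^{\bot}$.
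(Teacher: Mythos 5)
Your proof is correct and follows exactly the route the paper intends: the paper's entire justification for this lemma is the phrase ``By Lemma 2.1 and (3.1)'', and your case-by-case application of the projections $\overline{P}$, $\overline{P}^{\bot}$ to the Levi-Civita values is precisely that computation, carried out in full. All nine cases check out, including the two nontrivial cancellations $\overline{P}^{\bot}(\nabla^L_{X_2}X_1)=\overline{P}^{\bot}(\nabla^L_{X_3}X_1)=0$ and $\overline{P}(\nabla^L_{X_2}X_3)=\overline{P}(\nabla^L_{X_3}X_2)=0$.
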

Similar to the definition 2,3, we can define the curvature
$k^{L,\nabla^1}_{\gamma}$ associated to $\nabla^1$ of $\gamma$ at $\gamma(t)$.
By Lemma 3.1 and (2.7), we have
\begin{align}
\nabla^1_{\dot{\gamma}}\dot{\gamma}=
\ddot{\gamma}_1X_1+
\left[\ddot{\gamma}_2-L\omega(\dot{\gamma}(t))\frac{\dot{\gamma}_1}{2}\right]X_2
+\left[\frac{d}{dt}(\omega(\dot{\gamma}(t)))+\frac{1}{2}\dot{\gamma}_1\dot{\gamma}_2\right]X_3.
\end{align}
Similar to Lemma 2.5, we have
\vskip 0.5 true cm
\begin{lem}
Let $\gamma:[a,b]\rightarrow (\mathbb{H},g_L)$ be a Euclidean $C^2$-smooth regular curve in the Riemannian manifold $(\mathbb{H},g_L)$. Then
\begin{align}
k^{L,\nabla^1}_{\gamma}&=\left\{\left\{
\ddot{\gamma}_1^2+\left[
\ddot{\gamma}_2-L\omega(\dot{\gamma}(t))\frac{\dot{\gamma}_1}{2}
\right]^2+L\left[\frac{d}{dt}(\omega(\dot{\gamma}(t)))+\frac{1}{2}\dot{\gamma}_1\dot{\gamma}_2\right]^2\right\}\right.\\\notag
&\cdot\left[{\dot{\gamma}_1}^2+\dot{\gamma}_2^2+L(\omega(\dot{\gamma}(t)))^2\right]^{-2}\\\notag
&-\left\{{\dot{\gamma}_1}
\ddot{\gamma}_1
+\dot{\gamma}_2\left[
\ddot{\gamma}_2-L\omega(\dot{\gamma}(t))\frac{\dot{\gamma}_1}{2}
\right]
+
L\omega(\dot{\gamma}(t))\left[\frac{d}{dt}(\omega(\dot{\gamma}(t)))+\frac{1}{2}\dot{\gamma}_1\dot{\gamma}_2\right]\right\}^2\\\notag
&\left.\cdot\left[{\dot{\gamma}_1}^2+\dot{\gamma}_2^2+L(\omega(\dot{\gamma}(t)))^2\right]^{-3}\right\}^{\frac{1}{2}}.\\\notag
\end{align}
In particular, if $\gamma(t)$ is a horizontal point of $\gamma$,
\begin{align}
k^{L,\nabla^1}_{\gamma}&=\left\{\left\{
\ddot{\gamma}_1^2+
\ddot{\gamma}_2^2+L\left[\frac{d}{dt}(\omega(\dot{\gamma}(t)))+\frac{1}{2}\dot{\gamma}_1\dot{\gamma}_2\right]^2\right\}
\cdot\left[{\dot{\gamma}_1}^2+\dot{\gamma}_2^2\right]^{-2}\right.\\\notag
&\left.-\left\{{\dot{\gamma}_1}
\ddot{\gamma}_1
+\dot{\gamma}_2
\ddot{\gamma}_2
\right\}^2\cdot\left[{\dot{\gamma}_1}^2+\dot{\gamma}_2^2\right]^{-3}\right\}^{\frac{1}{2}}.\\\notag
\end{align}
\end{lem}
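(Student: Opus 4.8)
The plan is to evaluate the defining formula for $k^{L,\nabla^1}_{\gamma}$ from Definition 2.3 directly, feeding in the two vector expressions already established: the velocity $\dot{\gamma}$ from (2.7) and the acceleration $\nabla^1_{\dot{\gamma}}\dot{\gamma}$ from (3.3). The essential preliminary observation is that, with respect to $g_L$, the frame $\{X_1,X_2,X_3\}$ is orthogonal with $\|X_1\|_L=\|X_2\|_L=1$ while $\|X_3\|_L^2=L$, since $\widetilde{X_3}=L^{-\frac{1}{2}}X_3$ is a unit vector by construction. Consequently every norm and inner product reduces to pairing the $X_1$- and $X_2$-coefficients with weight $1$ and the $X_3$-coefficient with weight $L$; this is the only place the parameter $L$ enters, and tracking it carefully is what produces the factors of $L$ in the stated formula.

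First I would compute the three scalars appearing in Definition 2.3. Reading off the coefficients in (3.3) and weighting the $X_3$-component by $L$, one finds $\|\nabla^1_{\dot{\gamma}}\dot{\gamma}\|_L^2=\ddot{\gamma}_1^2+[\ddot{\gamma}_2-L\omega(\dot{\gamma}(t))\frac{\dot{\gamma}_1}{2}]^2+L[\frac{d}{dt}(\omega(\dot{\gamma}(t)))+\frac{1}{2}\dot{\gamma}_1\dot{\gamma}_2]^2$. Similarly, from (2.7) the squared speed is $\|\dot{\gamma}\|_L^2=\dot{\gamma}_1^2+\dot{\gamma}_2^2+L(\omega(\dot{\gamma}(t)))^2$, and pairing (3.3) against (2.7) gives $\langle\nabla^1_{\dot{\gamma}}\dot{\gamma},\dot{\gamma}\rangle_L=\dot{\gamma}_1\ddot{\gamma}_1+\dot{\gamma}_2[\ddot{\gamma}_2-L\omega(\dot{\gamma}(t))\frac{\dot{\gamma}_1}{2}]+L\omega(\dot{\gamma}(t))[\frac{d}{dt}(\omega(\dot{\gamma}(t)))+\frac{1}{2}\dot{\gamma}_1\dot{\gamma}_2]$. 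Substituting these three quantities into the square root in Definition 2.3, with $\|\dot{\gamma}\|_L^4$ and $\|\dot{\gamma}\|_L^6$ supplying the factors $[\dot{\gamma}_1^2+\dot{\gamma}_2^2+L(\omega(\dot{\gamma}(t)))^2]^{-2}$ and $[\dot{\gamma}_1^2+\dot{\gamma}_2^2+L(\omega(\dot{\gamma}(t)))^2]^{-3}$, reproduces the first displayed formula.

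For the special case, I would impose the horizontal condition $\omega(\dot{\gamma}(t))=0$, which annihilates exactly those terms carrying the explicit factor $L\omega(\dot{\gamma}(t))$: the acceleration's $X_2$-coefficient collapses to $\ddot{\gamma}_2$, the inner product reduces to $\dot{\gamma}_1\ddot{\gamma}_1+\dot{\gamma}_2\ddot{\gamma}_2$, and the denominators become powers of $\dot{\gamma}_1^2+\dot{\gamma}_2^2$; note that the $X_3$-coefficient $\frac{d}{dt}(\omega(\dot{\gamma}(t)))+\frac{1}{2}\dot{\gamma}_1\dot{\gamma}_2$ survives, since it contains no explicit $\omega(\dot{\gamma}(t))$ factor, and this is the origin of the surviving $L$-term in the second formula. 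Since the whole argument is a transcription of Definition 2.3 through the algebraic identities (2.7) and (3.3), there is no genuine conceptual obstacle; the only real care required is the bookkeeping of the metric weight $L$ on the $X_3$-direction and the faithful expansion of the squared brackets, exactly as in the proof of Lemma 2.5 for the first-kind connection.
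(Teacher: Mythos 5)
Your proposal is correct and follows exactly the route the paper intends: the paper proves this lemma by the remark ``Similar to Lemma 2.5,'' i.e.\ by substituting the velocity (2.7) and the acceleration (3.3) into the defining formula of Definition 2.3 and using that $X_1,X_2$ have unit length while $\Vert X_3\Vert_L^2=L$, which is precisely your computation of $\Vert\nabla^1_{\dot{\gamma}}\dot{\gamma}\Vert_L^2$, $\Vert\dot{\gamma}\Vert_L^2$ and $\langle\nabla^1_{\dot{\gamma}}\dot{\gamma},\dot{\gamma}\rangle_L$. Your treatment of the horizontal case, killing only the terms carrying an explicit factor $\omega(\dot{\gamma}(t))$ while keeping $\frac{d}{dt}(\omega(\dot{\gamma}(t)))+\frac{1}{2}\dot{\gamma}_1\dot{\gamma}_2$, is likewise exactly what the stated special case requires.
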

\vskip 0.5 true cm
\indent Similar to the definition 2.6, we can define the intrinsic curvature associated to the connection $\nabla^1$, $k_{\gamma}^{\infty,\nabla^1}$ of $\gamma$ at $\gamma(t)$. Similar to the lemma 2.7, we have
\vskip 0.5 true cm
\begin{lem}
Let $\gamma:[a,b]\rightarrow (\mathbb{H},g_L)$ be a Euclidean $C^2$-smooth regular curve in the Riemannian manifold $(\mathbb{H},g_L)$. Then
\begin{equation}
k_{\gamma}^{\infty,\nabla^1}=\frac{|\dot{\gamma}_1|}{2|\omega(\dot{\gamma}(t))|},~~if ~~\omega(\dot{\gamma}(t))\neq 0,
\end{equation}
\begin{align}
k^{\infty,\nabla^1}_{\gamma}&=\frac{|\ddot{\gamma}_1\dot{\gamma}_2-\ddot{\gamma}_2\dot{\gamma}_1|}{(\dot{\gamma}_1^2+\dot{\gamma}_2^2)^{\frac{3}{2}}},
~~if ~~\omega(\dot{\gamma}(t))= 0 ~~and~~\frac{d}{dt}(\omega(\dot{\gamma}(t)))+\frac{1}{2}\dot{\gamma}_1\dot{\gamma}_2=0,\\\notag
\end{align}
\begin{equation}
{\rm lim}_{L\rightarrow +\infty}\frac{k_{\gamma}^{L,\nabla^1}}{\sqrt{L}}=\frac{|\frac{d}{dt}(\omega(\dot{\gamma}(t)))+\frac{1}{2}\dot{\gamma}_1\dot{\gamma}_2|}
{\dot{\gamma}_1^2+\dot{\gamma}_2^2},~~if ~~\omega(\dot{\gamma}(t))= 0
~~and~~\frac{d}{dt}(\omega(\dot{\gamma}(t)))+\frac{1}{2}\dot{\gamma}_1\dot{\gamma}_2\neq 0.
\end{equation}
\end{lem}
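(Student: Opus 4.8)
The plan is to obtain the three limits by a leading-order analysis in $L$ of the closed-form expression for $k^{L,\nabla^1}_{\gamma}$ furnished by Lemma 3.2, exactly paralleling the argument for Lemma 2.7. To organize the bookkeeping I would abbreviate the three blocks of that formula as
$$A_L := \ddot{\gamma}_1^2 + \left[\ddot{\gamma}_2 - L\omega(\dot{\gamma})\tfrac{\dot{\gamma}_1}{2}\right]^2 + L\left[\tfrac{d}{dt}(\omega(\dot{\gamma})) + \tfrac{1}{2}\dot{\gamma}_1\dot{\gamma}_2\right]^2,$$
$$B_L := \dot{\gamma}_1^2 + \dot{\gamma}_2^2 + L(\omega(\dot{\gamma}))^2,$$
$$C_L := \dot{\gamma}_1\ddot{\gamma}_1 + \dot{\gamma}_2\left[\ddot{\gamma}_2 - L\omega(\dot{\gamma})\tfrac{\dot{\gamma}_1}{2}\right] + L\omega(\dot{\gamma})\left[\tfrac{d}{dt}(\omega(\dot{\gamma})) + \tfrac{1}{2}\dot{\gamma}_1\dot{\gamma}_2\right],$$
so that $k^{L,\nabla^1}_{\gamma} = (A_L B_L^{-2} - C_L^2 B_L^{-3})^{1/2}$. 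In each of the three cases I would read off the precise order of growth in $L$ of $A_L$, $B_L$, $C_L$, substitute, and pass to the limit using the $\sim$ notation of (2.12).

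In the first case, where $\omega(\dot{\gamma})\neq0$, the leading orders are $B_L\sim L(\omega(\dot{\gamma}))^2$ and $A_L\sim \tfrac{L^2}{4}(\omega(\dot{\gamma}))^2\dot{\gamma}_1^2$, whence $A_L B_L^{-2}\to \dot{\gamma}_1^2/(4(\omega(\dot{\gamma}))^2)$. The step I expect to matter most is the analysis of $C_L$: upon expanding the last bracket, the two $O(L)$ contributions $\dot{\gamma}_2\cdot(-L\omega(\dot{\gamma})\tfrac{\dot{\gamma}_1}{2})$ and $L\omega(\dot{\gamma})\cdot\tfrac{1}{2}\dot{\gamma}_1\dot{\gamma}_2$ cancel exactly, leaving $C_L=\dot{\gamma}_1\ddot{\gamma}_1+\dot{\gamma}_2\ddot{\gamma}_2+L\omega(\dot{\gamma})\tfrac{d}{dt}(\omega(\dot{\gamma}))=O(L)$. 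Hence $C_L^2 B_L^{-3}=O(L^2)/O(L^3)\to0$, and the whole expression tends to $|\dot{\gamma}_1|/(2|\omega(\dot{\gamma})|)$, which is (3.6). This cancellation is the only genuinely delicate point; without noticing it one would wrongly anticipate a surviving contribution from the second term.

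In the remaining cases $\omega(\dot{\gamma})=0$, so $L$ drops out of $B_L$ and out of the surviving terms of $C_L$, giving $C_L=\dot{\gamma}_1\ddot{\gamma}_1+\dot{\gamma}_2\ddot{\gamma}_2$ and $B_L=\dot{\gamma}_1^2+\dot{\gamma}_2^2$. When in addition $\tfrac{d}{dt}(\omega(\dot{\gamma}))+\tfrac12\dot{\gamma}_1\dot{\gamma}_2=0$ the $L$-term of $A_L$ also vanishes, $k^{L,\nabla^1}_{\gamma}$ becomes independent of $L$, and the limit follows from the Lagrange identity $(\ddot{\gamma}_1^2+\ddot{\gamma}_2^2)(\dot{\gamma}_1^2+\dot{\gamma}_2^2)-(\dot{\gamma}_1\ddot{\gamma}_1+\dot{\gamma}_2\ddot{\gamma}_2)^2=(\ddot{\gamma}_1\dot{\gamma}_2-\ddot{\gamma}_2\dot{\gamma}_1)^2$, yielding (3.7). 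When instead $\tfrac{d}{dt}(\omega(\dot{\gamma}))+\tfrac12\dot{\gamma}_1\dot{\gamma}_2\neq0$, only the $L$-term of $A_L$ survives at top order while $C_L$ stays bounded, so dividing by $\sqrt{L}$ discards the lower-order pieces and gives $k^{L,\nabla^1}_{\gamma}/\sqrt{L}\to |\tfrac{d}{dt}(\omega(\dot{\gamma}))+\tfrac12\dot{\gamma}_1\dot{\gamma}_2|/(\dot{\gamma}_1^2+\dot{\gamma}_2^2)$, which is (3.8). The entire argument mirrors Lemma 2.7; the only new ingredient is the extra $\tfrac12\dot{\gamma}_1\dot{\gamma}_2$ term carried by $\nabla^1$ (compare (3.3) with (2.8)), which is exactly what shifts the thresholds in the case distinction and must be propagated consistently through $A_L$ and $C_L$.
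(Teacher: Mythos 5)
Your proposal is correct and follows essentially the same route as the paper, which proves Lemma 3.3 by the same leading-order analysis in $L$ of the closed-form expression from Lemma 3.2 (the paper only indicates this by "similar to Lemma 2.7"). You correctly identify the one point where the new $\tfrac{1}{2}\dot{\gamma}_1\dot{\gamma}_2$ term matters, namely the exact cancellation of the two $O(L)$ contributions in $C_L$ when $\omega(\dot{\gamma})\neq 0$, and the Lagrange-identity step and the $\sqrt{L}$-rescaled case match the paper's computation.
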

For every $U,V\in T\Sigma$, we define $\nabla^{1,\Sigma}_UV={\pi} \nabla^1_UV$ where $\pi:T\mathbb{H}\rightarrow T\Sigma$ is the projection.
Similar to (2.18), we have
\begin{equation}
\nabla^{1,\Sigma}_{\dot{\gamma}}\dot{\gamma}=\langle \nabla^1_{\dot{\gamma}}\dot{\gamma},e_1\rangle_Le_1+\langle \nabla_{\dot{\gamma}}^1\dot{\gamma},e_2\rangle_Le_2,
\end{equation}
and
\begin{align}
\nabla^{1,\Sigma}_{\dot{\gamma}}\dot{\gamma}&=
\left\{\overline{q}\ddot{\gamma}_1
-\overline{p}\left[\ddot{\gamma}_2-L\omega(\dot{\gamma}(t))\frac{\dot{\gamma}_1}{2}\right]
\right\}e_1\\\notag
&+\left\{\overline{r_L}~~\overline{p}\ddot{\gamma}_1
+\overline{r_L}~~\overline{q}\left[\ddot{\gamma}_2-L\omega(\dot{\gamma}(t))\frac{\dot{\gamma}_1}{2}\right]
-\frac{l}{l_L}L^{\frac{1}{2}}\left[\frac{d}{dt}(\omega(\dot{\gamma}(t)))+\frac{1}{2}\dot{\gamma}_1\dot{\gamma}_2\right]\right\}e_2.\notag
\end{align}
Similar to Definitions 2.8 and 2.9, we can define
the geodesic curvature associated to $\nabla^1$, $k^{L,\nabla^1}_{\gamma,\Sigma}$ of $\gamma$ at $\gamma(t)$
and the intrinsic geodesic curvature associated to $\nabla^1$, $k_{\gamma,\Sigma}^{\infty,\nabla^1}$ of $\gamma$ at $\gamma(t)$.
Similar to Lemma 2.10, we have
\begin{lem}
Let $\Sigma\subset(\mathbb{H},g_L)$ be a regular surface.
Let $\gamma:[a,b]\rightarrow \Sigma$ be a Euclidean $C^2$-smooth regular curve. Then
\begin{equation}
k_{\gamma,\Sigma}^{\infty,\nabla^1}=\frac{|\overline{p}\dot{\gamma}_1|}{2|\omega(\dot{\gamma}(t))|},~~if ~~\omega(\dot{\gamma}(t))\neq 0,
\end{equation}
$$k^{\infty,\nabla^1}_{\gamma,\Sigma}=0
~~if ~~\omega(\dot{\gamma}(t))= 0, ~~and~~\frac{d}{dt}(\omega(\dot{\gamma}(t)))+\frac{1}{2}\dot{\gamma}_1\dot{\gamma}_2=0,$$
\begin{equation}
{\rm lim}_{L\rightarrow +\infty}\frac{k_{\gamma,\Sigma}^{L,\nabla^1}}{\sqrt{L}}=\frac{|\frac{d}{dt}(\omega(\dot{\gamma}(t)))+\frac{1}{2}\dot{\gamma}_1\dot{\gamma}_2|}
{\left(\overline{q}{\dot{\gamma}_1}-\overline{p}\dot{\gamma}_2\right)^2},~~if ~~\omega(\dot{\gamma}(t))= 0
~~and~~\frac{d}{dt}(\omega(\dot{\gamma}(t)))+\frac{1}{2}\dot{\gamma}_1\dot{\gamma}_2\neq 0.
\end{equation}
\end{lem}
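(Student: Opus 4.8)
The plan is to follow the scheme of Lemma 2.10, substituting the connection data of $\nabla^1$ recorded in (3.9) for that of $\nabla$. First I would abbreviate $\nabla^{1,\Sigma}_{\dot{\gamma}}\dot{\gamma}=Ae_1+Be_2$, reading off from (3.9) the coefficients
\[
A=\overline{q}\ddot{\gamma}_1-\overline{p}\left[\ddot{\gamma}_2-L\omega(\dot{\gamma})\tfrac{\dot{\gamma}_1}{2}\right],\qquad B=\overline{r_L}\,\overline{p}\,\ddot{\gamma}_1+\overline{r_L}\,\overline{q}\left[\ddot{\gamma}_2-L\omega(\dot{\gamma})\tfrac{\dot{\gamma}_1}{2}\right]-\frac{l}{l_L}L^{\frac{1}{2}}E,
\]
where $E:=\frac{d}{dt}(\omega(\dot{\gamma}))+\frac{1}{2}\dot{\gamma}_1\dot{\gamma}_2$. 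Since $\gamma$ lies on $\Sigma$, the velocity $\dot{\gamma}$ is tangent to $\Sigma$; expanding it in the orthonormal frame $e_1,e_2$ by means of (2.7) gives $\dot{\gamma}=Ce_1+De_2$ with
\[
C=\overline{q}\dot{\gamma}_1-\overline{p}\dot{\gamma}_2,\qquad D=\overline{r_L}(\overline{p}\dot{\gamma}_1+\overline{q}\dot{\gamma}_2)-\frac{l}{l_L}L^{\frac{1}{2}}\omega(\dot{\gamma}),
\]
so that $||\dot{\gamma}||^2_{\Sigma,L}=C^2+D^2$.

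Next I would insert these into the definition of $k^{L,\nabla^1}_{\gamma,\Sigma}$ (the analogue of Definition 2.8) and apply the Lagrange identity $(A^2+B^2)(C^2+D^2)-(AC+BD)^2=(AD-BC)^2$ to collapse the radicand, obtaining the clean form
\[
k^{L,\nabla^1}_{\gamma,\Sigma}=\frac{|AD-BC|}{(C^2+D^2)^{3/2}}.
\]
The rest is asymptotics as $L\to+\infty$: here $l,\overline{p},\overline{q}$ are independent of $L$, while $\overline{r_L}=r/l_L\sim (X_3u)L^{-1/2}/l$ vanishes like $L^{-1/2}$ and $l/l_L\to1$, and I would record the leading power of $L$ in each of $A,B,C,D$.

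Finally I would treat the three cases. If $\omega(\dot{\gamma})\neq0$, then $A$ and $D$ are dominated by their $L\omega(\dot{\gamma})$ terms, so $A\sim\tfrac{1}{2}\overline{p}\dot{\gamma}_1 L\omega(\dot{\gamma})$ and $D\sim-L^{1/2}\omega(\dot{\gamma})$, giving $AD-BC\sim-\tfrac{1}{2}\overline{p}\dot{\gamma}_1\,\omega(\dot{\gamma})^2L^{3/2}$ against $(C^2+D^2)^{3/2}\sim|\omega(\dot{\gamma})|^3L^{3/2}$; the factor $L^{3/2}$ cancels and yields (3.8). If $\omega(\dot{\gamma})=0$, the leading $L^{1/2}$ term of $D$ disappears and $D=\overline{r_L}(\overline{p}\dot{\gamma}_1+\overline{q}\dot{\gamma}_2)=O(L^{-1/2})$, while $B\sim-L^{1/2}E$; when moreover $E=0$ both $B$ and $D$ tend to $0$ and the quotient limit is $0$, and when $E\neq0$ one has $AD-BC\sim L^{1/2}EC$ and $(C^2+D^2)^{3/2}\to|C|^3$, so that $k^{L,\nabla^1}_{\gamma,\Sigma}/\sqrt{L}\to|E|/C^2$, which is (3.10). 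The step demanding the most care is the vanishing-$\omega$ case: the drop of $D$ from order $L^{1/2}$ to order $L^{-1/2}$ changes which cross-products survive in $AD-BC$, and one must confirm that the surviving term is precisely $-BC$ with the stated sign and that $C=\overline{q}\dot{\gamma}_1-\overline{p}\dot{\gamma}_2\neq0$, so that the denominator limit $|C|^3$ is meaningful.
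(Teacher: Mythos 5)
Your proposal is correct and follows essentially the same route as the paper, which itself only cites the analogous computation for Lemma 2.10 (i.e.\ expand $\nabla^{1,\Sigma}_{\dot{\gamma}}\dot{\gamma}$ and $\dot{\gamma}$ in the frame $e_1,e_2$, reduce the radicand to $|AD-BC|/(C^2+D^2)^{3/2}$, and track powers of $L$ in each case). The one point you flagged but did not close, $C=\overline{q}\dot{\gamma}_1-\overline{p}\dot{\gamma}_2\neq 0$ when $\omega(\dot{\gamma})=0$, follows from tangency: $\langle\dot{\gamma},v_L\rangle_L=0$ with $\omega(\dot{\gamma})=0$ forces $\overline{p}\dot{\gamma}_1+\overline{q}\dot{\gamma}_2=0$, hence $C^2=\dot{\gamma}_1^2+\dot{\gamma}_2^2>0$ (and in fact $D=0$ exactly there).
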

\indent Similar to the definitions 2.11 and 2.12, we can define
the signed geodesic curvature associated to $\nabla^1$, $k^{L,\nabla^1,s}_{\gamma,\Sigma}$ of $\gamma$ at $\gamma(t)$ and
the intrinsic geodesic curvature associated to $\nabla^1$, $k_{\gamma,\Sigma}^{\infty,\nabla^1,s}$ of $\gamma$.
Similar to Lemma 2.13, we have
\begin{lem}
Let $\Sigma\subset(\mathbb{H},g_L)$ be a regular surface.
Let $\gamma:[a,b]\rightarrow \Sigma$ be a Euclidean $C^2$-smooth regular curve. Then
\begin{equation}
k_{\gamma,\Sigma}^{\infty,\nabla^1,s}=\frac{\overline{p}\dot{\gamma}_1}{2|\omega(\dot{\gamma}(t))|},~~if ~~\omega(\dot{\gamma}(t))\neq 0,
\end{equation}
$$k^{\infty,\nabla^1,s}_{\gamma,\Sigma}=0
~~if ~~\omega(\dot{\gamma}(t))= 0, ~~and~~\frac{d}{dt}(\omega(\dot{\gamma}(t)))+\frac{1}{2}\dot{\gamma}_1\dot{\gamma}_2=0,$$
\begin{equation}
{\rm lim}_{L\rightarrow +\infty}\frac{k_{\gamma,\Sigma}^{L,\nabla^1,s}}{\sqrt{L}}=
\frac{(-\overline{q}{\dot{\gamma}_1}
+\overline{p}\dot{\gamma}_2)\left[\frac{d}{dt}(\omega(\dot{\gamma}(t)))+\frac{1}{2}\dot{\gamma}_1\dot{\gamma}_2\right]}
{|\overline{q}{\dot{\gamma}_1}-\overline{p}\dot{\gamma}_2|^3},
\end{equation}
$$~~if ~~\omega(\dot{\gamma}(t))= 0
~~and~~\frac{d}{dt}(\omega(\dot{\gamma}(t)))+\frac{1}{2}\dot{\gamma}_1\dot{\gamma}_2\neq 0.
$$
\end{lem}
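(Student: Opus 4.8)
The plan is to imitate the proof of Lemma 2.13, replacing $\nabla^{\Sigma}$ by $\nabla^{1,\Sigma}$ throughout, and to read off the three regimes from a single exact expression in $L$. I would begin from the definition of the signed geodesic curvature associated to $\nabla^1$ (the analogue of Definition 2.11),
\[
k^{L,\nabla^1,s}_{\gamma,\Sigma}=\frac{\langle \nabla^{1,\Sigma}_{\dot{\gamma}}\dot{\gamma},J_L(\dot{\gamma})\rangle_{\Sigma,L}}{\|\dot{\gamma}\|^3_{\Sigma,L}}.
\]
Since $\gamma$ lies on $\Sigma$ we have $\dot{\gamma}\in T\Sigma$, so expanding in the orthonormal basis $e_1,e_2$ gives $\dot{\gamma}=Ce_1+De_2$ with
\[
C:=\langle\dot{\gamma},e_1\rangle_L=\overline{q}\dot{\gamma}_1-\overline{p}\dot{\gamma}_2,\qquad D:=\langle\dot{\gamma},e_2\rangle_L=\overline{r_L}(\overline{p}\dot{\gamma}_1+\overline{q}\dot{\gamma}_2)-\tfrac{l}{l_L}L^{1/2}\omega(\dot{\gamma}),
\]
where I have used $\langle X_3,\widetilde{X_3}\rangle_L=L^{1/2}$. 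By (2.17), $J_L(\dot{\gamma})=Ce_2-De_1$. Writing the right-hand side of (3.10) as $\nabla^{1,\Sigma}_{\dot{\gamma}}\dot{\gamma}=Ae_1+Be_2$, orthonormality of $e_1,e_2$ then yields the exact formula $k^{L,\nabla^1,s}_{\gamma,\Sigma}=(BC-AD)/(C^2+D^2)^{3/2}$, valid for every $L>0$.

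The heart of the matter is the asymptotics of this rational expression as $L\to+\infty$. The one input I need is $r=\widetilde{X_3}u=L^{-1/2}u_{x_3}$, whence $\overline{r_L}=O(L^{-1/2})$, $l_L\to l$, $l/l_L\to1$, $\overline{p_L}\to\overline{p}$ and $\overline{q_L}\to\overline{q}$. I would then track the leading orders of $A,B,C,D$. If $\omega(\dot{\gamma})\neq0$, then $A\sim\tfrac{1}{2}\overline{p}\dot{\gamma}_1L\,\omega(\dot{\gamma})$ and $D\sim-L^{1/2}\omega(\dot{\gamma})$, so the numerator is governed by $-AD\sim\tfrac{1}{2}\overline{p}\dot{\gamma}_1L^{3/2}\omega(\dot{\gamma})^2$ while $(C^2+D^2)^{3/2}\sim L^{3/2}|\omega(\dot{\gamma})|^3$; the powers of $L$ cancel and the limit is $\overline{p}\dot{\gamma}_1/(2|\omega(\dot{\gamma})|)$, which is (3.13). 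If $\omega(\dot{\gamma})=0$, then $A,C=O(1)$ while $D=\overline{r_L}(\overline{p}\dot{\gamma}_1+\overline{q}\dot{\gamma}_2)=O(L^{-1/2})$; in the subcase $\frac{d}{dt}(\omega(\dot{\gamma}))+\tfrac{1}{2}\dot{\gamma}_1\dot{\gamma}_2=0$ one also has $B=O(L^{-1/2})$, so the numerator tends to $0$ and the denominator to $|C|^3$, giving $0$; in the subcase $\frac{d}{dt}(\omega(\dot{\gamma}))+\tfrac{1}{2}\dot{\gamma}_1\dot{\gamma}_2\neq0$ one has $B\sim-L^{1/2}[\frac{d}{dt}(\omega(\dot{\gamma}))+\tfrac{1}{2}\dot{\gamma}_1\dot{\gamma}_2]$, so the numerator is dominated by $BC=O(L^{1/2})$ and $(C^2+D^2)^{3/2}\to|C|^3=|\overline{q}\dot{\gamma}_1-\overline{p}\dot{\gamma}_2|^3$; dividing by $\sqrt{L}$ and passing to the limit gives (3.14).

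I expect the subtleties to be bookkeeping rather than conceptual. The main point to check is that in the degenerate case $-AD=O(L^{-1/2})$ is genuinely subleading relative to $BC=O(L^{1/2})$, so that the signed quantity $BC$ alone governs the limit; keeping the signs (rather than passing to absolute values as in the geodesic-curvature Lemma 3.4) is precisely what produces the signed numerator of (3.14). One should also verify that the denominator does not degenerate when $\omega(\dot{\gamma})=0$: differentiating $u\circ\gamma\equiv0$ gives $\dot{\gamma}_1p+\dot{\gamma}_2q+\omega(\dot{\gamma})u_{x_3}=0$, which at a horizontal point reduces to $\overline{p}\dot{\gamma}_1+\overline{q}\dot{\gamma}_2=0$; together with $\dot{\gamma}\neq0$ this forces $C=\overline{q}\dot{\gamma}_1-\overline{p}\dot{\gamma}_2\neq0$, so dividing by $(C^2+D^2)^{3/2}\to|C|^3>0$ is legitimate.
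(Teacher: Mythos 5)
Your proposal is correct and follows exactly the route the paper intends: the paper gives no explicit proof of Lemma 3.5 (it only says ``similar to Lemma 2.13''), and the intended argument is precisely your computation — expand $\dot\gamma$ and $\nabla^{1,\Sigma}_{\dot\gamma}\dot\gamma$ (formula (3.10)) in the orthonormal frame $e_1,e_2$, write $k^{L,\nabla^1,s}_{\gamma,\Sigma}=(BC-AD)/(C^2+D^2)^{3/2}$, and extract the three limits using $r=O(L^{-1/2})$. Your added checks (that $C\neq 0$ at horizontal points via differentiating $u\circ\gamma\equiv 0$, and that $-AD$ is subleading in the degenerate case) are exactly the bookkeeping the paper leaves implicit, and they go through.
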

\indent Similar to (2.26), we can define the {\it second fundamental form associated to $\nabla^1$}, $II^{\nabla^1,L}$ of the
embedding of $\Sigma$ into $(\mathbb{H},g_L)$.
Similarly to Theorem 2.14, we have
\vskip 0.5 true cm
\begin{thm} The second fundamental form $II^{\nabla^1,L}$ of the
embedding of $\Sigma$ into $(\mathbb{H},g_L)$ is given by
\begin{equation}
II^{\nabla^1,L}=\left(
  \begin{array}{cc}
   h^1_{11}, & h^1_{12}\\
   h^1_{21}, & h^1_{22}\\
  \end{array}
\right),
\end{equation}
where $$h^1_{11}= \frac{l}{l_L}[X_1(\overline{p})+X_2(\overline{q})]+\frac{\sqrt{L}\overline{p}~\overline{q}~\overline{r_L}}{2},$$
    $$h^1_{12}=-\frac{l_L}{l}\langle e_1,\nabla_H(\overline{r_L})\rangle_L-\frac{1}{2}\overline{r_L}^2~\overline{q}^2\sqrt{L}
    -\frac{l}{2l_L}\overline{q}~\overline{q_L}\sqrt{L}
    ,$$
    $$h^1_{21}=-\frac{l_L}{l}\langle e_1,\nabla_H(\overline{r_L})\rangle_L-\frac{\sqrt{L}}{2}+\frac{\sqrt{L}}{2}\frac{l^2}{l_L^2}
    -\frac{\sqrt{L}}{2}\overline{r_L}^2\overline{q}^2,$$
   $$h^1_{22}=-\frac{l^2}{l_L^2}\langle e_2,\nabla_H(\frac{r}{l})\rangle_L+\widetilde{X_3}(\overline{r_L})
    -\frac{\sqrt{L}}{2}\frac{l}{l_L}\overline{p}~\overline{q_L}~\overline{r_L}
     -\frac{\sqrt{L}}{2}\overline{p}~\overline{q}~\overline{r_L}^3 .$$
\end{thm}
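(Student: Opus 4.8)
The plan is to reduce everything to the Levi-Civita computation already available from Theorem 4.3 in \cite{CDPT}, exploiting that $\nabla^1$ and $\nabla^L$ differ by a tensor. First I would introduce the difference operator $D(U,V):=\nabla^1_UV-\nabla^L_UV$, which is $C^\infty(\mathbb{H})$-bilinear over functions, and read off its values on the frame $\{X_1,X_2,X_3\}$ by subtracting Lemma 2.1 from Lemma 3.1. The only nonzero components are
$$D(X_2,X_1)=\frac{1}{2}X_3,\quad D(X_3,X_1)=\frac{L}{2}X_2,\quad D(X_2,X_3)=D(X_3,X_2)=-\frac{L}{2}X_1;$$
in particular $D(X_1,\cdot)=0$, so only the first-argument slots $X_2$ and $X_3$ can contribute.

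Because $D$ is tensorial, each entry of $II^{\nabla^1,L}$ splits as
$$h^1_{ij}=\langle\nabla^1_{e_i}v_L,e_j\rangle_L=\langle\nabla^L_{e_i}v_L,e_j\rangle_L+\langle D(e_i,v_L),e_j\rangle_L,$$
where the first summand is the $(i,j)$ entry of the Levi-Civita second fundamental form, i.e.\ the quantity $\langle\nabla^L_{e_i}v_L,e_j\rangle_L$ supplied by Theorem 4.3 in \cite{CDPT} (these are precisely the expressions appearing inside the proof of Theorem 2.14, before the first-kind correction is added). Thus the whole task is to evaluate the four correction terms $\langle D(e_i,v_L),e_j\rangle_L$.

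To compute these I would substitute the expansions (2.15) of $v_L,e_1,e_2$ into the bilinear form $D$, writing $\widetilde{X_3}=L^{-1/2}X_3$ so that the $X_3$-coefficients carry a factor $L^{-1/2}$, and then evaluate the inner products with the convention $\langle X_1,X_1\rangle_L=\langle X_2,X_2\rangle_L=1$, $\langle X_3,X_3\rangle_L=L$ and vanishing cross terms. Since $e_1$ has no $X_3$-component and $D(X_1,\cdot)=0$, only a handful of products survive, and each carries a net factor $\sqrt{L}$ after the $L^{\pm 1/2}$ bookkeeping; for example one finds $\langle D(e_1,v_L),e_1\rangle_L=\frac{\sqrt{L}}{2}\overline{p}\,\overline{q}\,\overline{r_L}$, which is exactly the correction displayed in $h^1_{11}$.

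The only genuinely delicate point is the final algebraic simplification, since the raw correction terms mix the two normalizations $\overline{p},\overline{q}$ and $\overline{p_L},\overline{q_L},\overline{r_L}$. Here I would repeatedly use the identities coming from (2.14), namely $\overline{p}^2+\overline{q}^2=1$, $\overline{p}\,\overline{p_L}+\overline{q}\,\overline{q_L}=l/l_L$, $\overline{r_L}^2+l^2/l_L^2=1$, and $\overline{p}\,\overline{q_L}=\overline{q}\,\overline{p_L}$. For instance the raw correction for $h^1_{12}$ comes out as $\frac{\sqrt{L}}{2}\overline{p}^2\overline{r_L}^2+\frac{\sqrt{L}}{2}\frac{l}{l_L}\overline{p}\,\overline{p_L}$; adding the $-\frac{\sqrt{L}}{2}$ already present in the Levi-Civita entry and applying these identities rewrites it as $-\frac{\sqrt{L}}{2}\overline{r_L}^2\overline{q}^2-\frac{l}{2l_L}\overline{q}\,\overline{q_L}\sqrt{L}$, the stated value; the $h^1_{22}$ correction collapses similarly once $\overline{p}\,\overline{q_L}=\overline{q}\,\overline{p_L}$ is used to combine its two off-axis contributions. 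Matching each of the four simplified sums against $h^1_{11},h^1_{12},h^1_{21},h^1_{22}$ completes the proof, and I expect this normalization-juggling to be the main obstacle rather than any conceptual difficulty.
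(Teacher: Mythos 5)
Your proposal is correct and takes essentially the same approach as the paper: the paper proves this theorem ``similarly to Theorem 2.14,'' whose proof likewise writes each entry $\langle\nabla^1_{e_i}v_L,e_j\rangle_L$ as the Levi-Civita entry from Theorem 4.3 of \cite{CDPT} plus the correction coming from the difference $\nabla^1-\nabla^L$ evaluated on $(e_i,v_L)$. Your difference-tensor components and the normalization identities do reproduce all four stated entries (in particular the $h^1_{12}$ simplification to $-\frac{\sqrt{L}}{2}\overline{r_L}^2\overline{q}^2-\frac{\sqrt{L}}{2}\frac{l}{l_L}\overline{q}\,\overline{q_L}$ and the cancellation via $\overline{p}\,\overline{q_L}=\overline{q}\,\overline{p_L}$ in $h^1_{22}$ check out).
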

Similar to (2.29) and (2.30), we can define $R^1(X,Y)Z$, $\mathcal{K}^{\Sigma,\nabla^1}(e_1,e_2)$ and $\mathcal{K}^{\nabla^1}(e_1,e_2)$
(2.31) is correct for $\nabla^1$. Similar to Proposition 2.15, we have
\vskip 0.5 true cm
\begin{prop} Away from characteristic points, the horizontal mean curvature associated to $\nabla^1$, $\mathcal{H}_{\nabla^1,\infty}$ of $\Sigma\subset\mathbb{H}$ is given by
\begin{equation}
\mathcal{H}_{\nabla^1,\infty}={\rm lim}_{L\rightarrow +\infty}\mathcal{H}_{\nabla^1,L}=X_1(\overline{p})+X_2(\overline{q}).
\end{equation}
\end{prop}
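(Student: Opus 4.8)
The plan is to read off $\mathcal{H}_{\nabla^1,L}$ as the trace of the second fundamental form computed in Theorem 3.7, and then to pass to the sub-Riemannian limit $L\to+\infty$ entry by entry. Since $\mathcal{H}_{\nabla^1,L}=\mathrm{tr}(II^{\nabla^1,L})=h^1_{11}+h^1_{22}$, I would first substitute the explicit expressions for $h^1_{11}$ and $h^1_{22}$ from Theorem 3.7 and group the resulting terms according to their order in $L$.

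The computation rests on the elementary asymptotics of the quantities in (2.14) and (2.15). Because $r=\widetilde{X_3}u=L^{-1/2}X_3u$, we have $r\to 0$, hence $l_L=\sqrt{p^2+q^2+r^2}\to l$, $l/l_L\to 1$, $\overline{q_L}=q/l_L\to\overline{q}$, and $\overline{r_L}=r/l_L\to 0$ as $L\to+\infty$. The single nontrivial quantity of order one is
\[
\sqrt{L}\,\overline{r_L}=\sqrt{L}\,\frac{L^{-1/2}X_3u}{l_L}=\frac{X_3u}{l_L}\longrightarrow\frac{X_3u}{l},
\]
which is what makes the $\sqrt{L}$-terms in $h^1_{11}$ and $h^1_{22}$ survive as finite contributions.

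With these in hand the limit is taken term by term. The genuinely order-one (non-$\sqrt{L}$) pieces behave as expected: $\frac{l}{l_L}[X_1(\overline{p})+X_2(\overline{q})]\to X_1(\overline{p})+X_2(\overline{q})$, while both $-\frac{l^2}{l_L^2}\langle e_2,\nabla_H(r/l)\rangle_L$ and $\widetilde{X_3}(\overline{r_L})$ are $O(L^{-1})$ (each carries two factors decaying like $L^{-1/2}$) and therefore vanish. Among the $\sqrt{L}$-terms, the cubic contribution $-\frac{\sqrt{L}}{2}\overline{p}\,\overline{q}\,\overline{r_L}^3=-\frac{1}{2}\overline{p}\,\overline{q}\,\overline{r_L}^2(\sqrt{L}\,\overline{r_L})$ is again $O(L^{-1})$ and drops out, whereas the two terms linear in $\overline{r_L}$ converge to $\frac{\overline{p}\,\overline{q}\,X_3u}{2l}$ (from $h^1_{11}$) and $-\frac{\overline{p}\,\overline{q}\,X_3u}{2l}$ (from $h^1_{22}$, using $l/l_L\to1$ and $\overline{q_L}\to\overline{q}$). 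These two cancel exactly, leaving $\mathcal{H}_{\nabla^1,\infty}=X_1(\overline{p})+X_2(\overline{q})$.

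I expect the only delicate point to be precisely this cancellation: unlike in Proposition 2.15, where the diagonal entries carried no $\sqrt{L}$-terms, here $h^1_{11}$ and $h^1_{22}$ each retain a nonzero finite limit coming from $\sqrt{L}\,\overline{r_L}$, so one cannot argue termwise that every $\sqrt{L}$-contribution vanishes. The argument must instead track these two finite pieces and verify that they are equal and opposite, which follows from the coincidences $\overline{q_L}\to\overline{q}$ and $l/l_L\to1$. Once this is observed, the remaining estimates are the routine $O(L^{-1})$ bounds already employed in the first-kind case.
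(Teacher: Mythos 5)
Your proposal is correct and follows essentially the route the paper intends: $\mathcal{H}_{\nabla^1,L}=\mathrm{tr}(II^{\nabla^1,L})=h^1_{11}+h^1_{22}$ from Theorem 3.7, followed by the termwise limit using $l/l_L\to 1$, $\overline{q_L}\to\overline{q}$, $\overline{r_L}\to 0$ and $\sqrt{L}\,\overline{r_L}\to X_3u/l$ (the paper only asserts the result ``similar to Proposition 2.15'' and omits these details). You also correctly isolate the one point where this case differs from Proposition 2.15, namely the exact cancellation of the two finite contributions $\pm\frac{1}{2}\overline{p}\,\overline{q}\,X_3u/l$ coming from the $\sqrt{L}$-terms in $h^1_{11}$ and $h^1_{22}$.
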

\vskip 0.5 true cm
By Lemma 3.1, we have
$R^1(X_i,X_j)X_k=0$ for any $i,j,k$.
Similar to Proposition 2.17, we have
\begin{prop} Away from characteristic points, we have
\begin{equation}
\mathcal{K}^{\Sigma,\nabla^1}(e_1,e_2)\rightarrow \mathcal{K}^{\Sigma,\nabla^1,\infty}+O(\frac{1}{\sqrt{L}}),~~{\rm as}~~L\rightarrow +\infty,
\end{equation}
where
\begin{equation}
\mathcal{K}^{\Sigma,\nabla^1,\infty}:=-\frac{\overline{p}~\overline{q}(X_3(u))}{2\sqrt{p^2+q^2}}[X_1(\overline{p})+X_2(\overline{q})]
-\frac{\overline{q}^2}{2}\left[\langle e_1,\nabla_H(\frac{X_3u}{|\nabla_Hu|})\rangle
+\frac{(X_3u)^2}{p^2+q^2}\right].
\end{equation}
\end{prop}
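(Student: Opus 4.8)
The plan is to reduce the whole computation to the determinant of the second fundamental form. Since it was observed just above that $R^1(X_i,X_j)X_k=0$ for all $i,j,k$, the intrinsic curvature of the ambient connection vanishes, $\mathcal{K}^{\nabla^1}(e_1,e_2)=-\langle R^1(e_1,e_2)e_1,e_2\rangle_L=0$. Hence by the Gauss equation (2.31), valid for $\nabla^1$ as noted, one has $\mathcal{K}^{\Sigma,\nabla^1}(e_1,e_2)=\det(II^{\nabla^1,L})=h^1_{11}h^1_{22}-h^1_{12}h^1_{21}$, so the statement follows once the $L\to+\infty$ asymptotics of this determinant are understood. I would substitute the four entries from Theorem 3.7 and expand them using the scaling relations $r=L^{-1/2}X_3u$, $l_L=\sqrt{l^2+r^2}\to l$, $\overline{p_L}\to\overline{p}$, $\overline{q_L}\to\overline{q}$, $\overline{r_L}\to 0$, together with the two facts that do the real work, namely $\sqrt{L}\,\overline{r_L}=X_3u/l_L\to X_3u/l$ and $1-l^2/l_L^2=\overline{r_L}^2$.

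The key preliminary step is to pin down the order in $L$ of each entry. A direct inspection shows that $h^1_{11}$ and $h^1_{22}$ are $O(1)$, with
\begin{equation}
h^1_{11}\to [X_1(\overline{p})+X_2(\overline{q})]+\frac{\overline{p}\,\overline{q}\,X_3u}{2l},\qquad h^1_{22}\to -\frac{\overline{p}\,\overline{q}\,X_3u}{2l};
\end{equation}
that $h^1_{12}$ is $O(\sqrt{L})$ with $h^1_{12}/\sqrt{L}\to -\tfrac{1}{2}\overline{q}^2$; and, crucially, that $h^1_{21}$ is only $O(1/\sqrt{L})$ once the two terms $-\tfrac{\sqrt{L}}{2}+\tfrac{\sqrt{L}}{2}\tfrac{l^2}{l_L^2}$ are combined into $-\tfrac{\sqrt{L}}{2}\overline{r_L}^2$. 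Consequently $h^1_{11}h^1_{22}$ converges to the product of the two limits above, whereas in $h^1_{12}h^1_{21}$ I must pair the $O(\sqrt{L})$ part of $h^1_{12}$ with the full $O(1/\sqrt{L})$ expansion of $h^1_{21}$; here the limits $\sqrt{L}\langle e_1,\nabla_H(\overline{r_L})\rangle_L\to\langle e_1,\nabla_H(\tfrac{X_3u}{|\nabla_Hu|})\rangle$ and $L\,\overline{r_L}^2\to (X_3u)^2/(p^2+q^2)$ produce a finite value.

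Finally I would add the two contributions. The product $h^1_{11}h^1_{22}$ supplies $-\tfrac{\overline{p}\,\overline{q}\,X_3u}{2l}[X_1(\overline{p})+X_2(\overline{q})]$ together with a multiple of $(X_3u)^2$, while $-h^1_{12}h^1_{21}$ supplies $-\tfrac{\overline{q}^2}{2}\langle e_1,\nabla_H(\tfrac{X_3u}{|\nabla_Hu|})\rangle$ and further multiples of $(X_3u)^2$. Collecting the $(X_3u)^2$ terms leaves the coefficient $-\tfrac{(X_3u)^2}{4l^2}\,\overline{q}^2(\overline{p}^2+1+\overline{q}^2)$, which collapses to $-\tfrac{\overline{q}^2(X_3u)^2}{2(p^2+q^2)}$ by $\overline{p}^2+\overline{q}^2=1$; this is exactly the remaining term of $\mathcal{K}^{\Sigma,\nabla^1,\infty}$, and all neglected contributions are $O(1/\sqrt{L})$. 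I expect the main obstacle to be precisely the bookkeeping at this last stage: one must confirm that the a priori divergent $O(L)$ and $O(\sqrt{L})$ pieces of the products cancel, which forces one to retain the subleading $O(1/\sqrt{L})$ terms of $h^1_{21}$—whose naive limit is $0$—because they are amplified by the $O(\sqrt{L})$ factor carried by $h^1_{12}$.
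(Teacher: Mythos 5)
Your proposal is correct and follows essentially the same route as the paper: since $R^1\equiv 0$, the Gauss equation reduces $\mathcal{K}^{\Sigma,\nabla^1}(e_1,e_2)$ to $\det(II^{\nabla^1,L})$, whose entries from Theorem 3.7 are expanded in powers of $L^{-1/2}$ exactly as you describe (including the crucial cancellation $-\tfrac{\sqrt{L}}{2}+\tfrac{\sqrt{L}}{2}\tfrac{l^2}{l_L^2}=-\tfrac{\sqrt{L}}{2}\overline{r_L}^2$ in $h^1_{21}$ and the identity $\overline{p}^2+1+\overline{q}^2=2$ at the end). Nothing further is needed.
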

By Lemma 3.5 and Proposition 3.8, similar to Theorem 2.19, we have
\begin{thm}
 Let $\Sigma\subset (\mathbb{H},g_L)$
  be a regular surface with finitely many boundary components $(\partial\Sigma)_i,$ $i\in\{1,\cdots,n\}$, given by Euclidean $C^2$-smooth regular and closed curves $\gamma_i:[0,2\pi]\rightarrow (\partial\Sigma)_i$.
 Suppose that the characteristic set $C(\Sigma)$ satisfies $\mathcal{H}^1(C(\Sigma))=0$ and that
$||\nabla_Hu||_H^{-1}$ is locally summable with respect to the Euclidean $2$-dimensional Hausdorff measure
near the characteristic set $C(\Sigma)$, then
\begin{equation}
\int_{\Sigma}\mathcal{K}^{\Sigma,\nabla^1,\infty}d\sigma_{\Sigma}+\sum_{i=1}^n\int_{\gamma_i}k^{\infty,\nabla^1,s}_{\gamma_i,\Sigma}d{s}=0.
\end{equation}
\end{thm}

We note that Theorem 3.9 is different from Theorem 1.1 in \cite{BTV}.

\section{ The sub-Riemannian limit and the adapted connection}
\indent We define the adapted connection $\nabla^2$ on the Heisenberg group by $\nabla^2_{X_j}X_k=0$ for any $j,k$. Then $\nabla^2$ is a metric
connection. Similar to the definition 2.3, we can define the curvature
$k^{L,\nabla^2}_{\gamma}$ associated to $\nabla^2$ of $\gamma$ at $\gamma(t)$.
We have
\begin{align}
\nabla^2_{\dot{\gamma}}\dot{\gamma}=
\ddot{\gamma}_1X_1+
\ddot{\gamma}_2X_2
+\frac{d}{dt}(\omega(\dot{\gamma}(t)))X_3.
\end{align}
Similar to Lemma 2.5, we have
\vskip 0.5 true cm
\begin{lem}
Let $\gamma:[a,b]\rightarrow (\mathbb{H},g_L)$ be a Euclidean $C^2$-smooth regular curve in the Riemannian manifold $(\mathbb{H},g_L)$. Then
\begin{align}
k^{L,\nabla^2}_{\gamma}&=\left\{\left\{
\ddot{\gamma}_1^2+
\ddot{\gamma}_2^2+L\left[\frac{d}{dt}(\omega(\dot{\gamma}(t)))\right]^2\right\}\right.\\\notag
&\cdot\left[{\dot{\gamma}_1}^2+\dot{\gamma}_2^2+L(\omega(\dot{\gamma}(t)))^2\right]^{-2}\\\notag
&-\left\{{\dot{\gamma}_1}
\ddot{\gamma}_1
+\dot{\gamma}_2
\ddot{\gamma}_2
+L\omega(\dot{\gamma}(t))\frac{d}{dt}(\omega(\dot{\gamma}(t)))\right\}^2\\\notag
&\left.\cdot\left[{\dot{\gamma}_1}^2+\dot{\gamma}_2^2+L(\omega(\dot{\gamma}(t)))^2\right]^{-3}\right\}^{\frac{1}{2}}.\\\notag
\end{align}
In particular, if $\gamma(t)$ is a horizontal point of $\gamma$,
\begin{align}
k^{L,\nabla^2}_{\gamma}&=\left\{\left\{
\ddot{\gamma}_1^2+
\ddot{\gamma}_2^2+L\left[\frac{d}{dt}(\omega(\dot{\gamma}(t)))\right]^2\right\}
\cdot\left[{\dot{\gamma}_1}^2+\dot{\gamma}_2^2\right]^{-2}\right.\\\notag
&\left.-\left\{{\dot{\gamma}_1}
\ddot{\gamma}_1
+\dot{\gamma}_2
\ddot{\gamma}_2
\right\}^2\cdot\left[{\dot{\gamma}_1}^2+\dot{\gamma}_2^2\right]^{-3}\right\}^{\frac{1}{2}}.\\\notag
\end{align}
\end{lem}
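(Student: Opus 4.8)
The plan is to substitute the components of $\nabla^2_{\dot{\gamma}}\dot{\gamma}$ recorded in (4.1) directly into the definition of $k^{L,\nabla^2}_{\gamma}$ (Definition 2.3 with $\nabla$ replaced by $\nabla^2$), following the same template as the proof of Lemma 2.5. The simplification relative to the $\nabla$ and $\nabla^1$ cases comes from the fact that $\nabla^2$ annihilates the frame $X_1,X_2,X_3$, so by the Leibniz rule $\nabla^2_{\dot{\gamma}}\dot{\gamma}$ is obtained simply by differentiating the coefficients of $\dot{\gamma}$ in this frame, which is exactly the content of (4.1).

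First I would record the three metric quantities entering Definition 2.3. Since $X_1,X_2,\widetilde{X_3}=L^{-\frac{1}{2}}X_3$ are $g_L$-orthonormal, the frame $X_1,X_2,X_3$ is orthogonal with $\langle X_1,X_1\rangle_L=\langle X_2,X_2\rangle_L=1$ and $\langle X_3,X_3\rangle_L=L$. Combining this with the expansion $\dot{\gamma}=\dot{\gamma}_1X_1+\dot{\gamma}_2X_2+\omega(\dot{\gamma}(t))X_3$ from (2.7) gives
\begin{equation*}
||\dot{\gamma}||_L^2=\dot{\gamma}_1^2+\dot{\gamma}_2^2+L(\omega(\dot{\gamma}(t)))^2 .
\end{equation*}
Next, from (4.1) I would compute the squared norm
\begin{equation*}
||\nabla^2_{\dot{\gamma}}\dot{\gamma}||_L^2=\ddot{\gamma}_1^2+\ddot{\gamma}_2^2+L\left[\frac{d}{dt}(\omega(\dot{\gamma}(t)))\right]^2
\end{equation*}
and the inner product
\begin{equation*}
\langle\nabla^2_{\dot{\gamma}}\dot{\gamma},\dot{\gamma}\rangle_L=\dot{\gamma}_1\ddot{\gamma}_1+\dot{\gamma}_2\ddot{\gamma}_2+L\,\omega(\dot{\gamma}(t))\frac{d}{dt}(\omega(\dot{\gamma}(t))) .
\end{equation*}

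Substituting these three expressions into $k^{L,\nabla^2}_{\gamma}=\left(||\nabla^2_{\dot{\gamma}}\dot{\gamma}||_L^2\,||\dot{\gamma}||_L^{-4}-\langle\nabla^2_{\dot{\gamma}}\dot{\gamma},\dot{\gamma}\rangle_L^2\,||\dot{\gamma}||_L^{-6}\right)^{\frac{1}{2}}$ reproduces the first displayed formula verbatim. For the horizontal case I would impose $\omega(\dot{\gamma}(t))=0$ (Definition 2.2): then $||\dot{\gamma}||_L^2=\dot{\gamma}_1^2+\dot{\gamma}_2^2$ and the $L$-term in the inner product vanishes, whereas the term $L[\frac{d}{dt}(\omega(\dot{\gamma}(t)))]^2$ persists in $||\nabla^2_{\dot{\gamma}}\dot{\gamma}||_L^2$ because $\frac{d}{dt}(\omega(\dot{\gamma}(t)))$ need not be zero, yielding the second displayed formula. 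The whole argument is mechanical, so there is no genuine obstacle; the only points demanding care are keeping the factor $L$ attached to every $X_3$-contribution (so that it appears in the $L(\omega)^2$, $L[\frac{d}{dt}\omega]^2$, and $L\,\omega\frac{d}{dt}\omega$ terms) and noting that at a horizontal point only the first-order quantity $\omega(\dot{\gamma}(t))$, and not its derivative, drops out.
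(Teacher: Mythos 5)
Your proposal is correct and follows exactly the route the paper intends: the paper derives this lemma by substituting the components of $\nabla^2_{\dot{\gamma}}\dot{\gamma}$ from (4.1) into Definition 2.3, "similar to Lemma 2.5," which is precisely your computation of $||\dot{\gamma}||_L^2$, $||\nabla^2_{\dot{\gamma}}\dot{\gamma}||_L^2$, and $\langle\nabla^2_{\dot{\gamma}}\dot{\gamma},\dot{\gamma}\rangle_L$ in the orthogonal frame $X_1,X_2,X_3$ with $\langle X_3,X_3\rangle_L=L$. Your handling of the horizontal case (only $\omega(\dot{\gamma}(t))$ vanishes, not its derivative) also matches the paper.
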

\vskip 0.5 true cm
\indent Similar to the definition 2.6, we can define the intrinsic curvature associated to the connection $\nabla^2$, $k_{\gamma}^{\infty,\nabla^2}$ of $\gamma$ at $\gamma(t)$. Similar to the lemma 2.7, we have
\vskip 0.5 true cm
\begin{lem}
Let $\gamma:[a,b]\rightarrow (\mathbb{H},g_L)$ be a Euclidean $C^2$-smooth regular curve in the Riemannian manifold $(\mathbb{H},g_L)$. Then
\begin{equation}
k_{\gamma}^{\infty,\nabla^2}=0,~~if ~~\omega(\dot{\gamma}(t))\neq 0,
\end{equation}
\begin{align}
k^{\infty,\nabla^2}_{\gamma}&=\frac{|\ddot{\gamma}_1\dot{\gamma}_2-\ddot{\gamma}_2\dot{\gamma}_1|}
{(\dot{\gamma}_1^2+\dot{\gamma}_2^2)^{\frac{3}{2}}},
~~if ~~\omega(\dot{\gamma}(t))= 0 ~~and~~\frac{d}{dt}(\omega(\dot{\gamma}(t)))=0,\\\notag
\end{align}
\begin{equation}
{\rm lim}_{L\rightarrow +\infty}\frac{k_{\gamma}^{L,\nabla^2}}{\sqrt{L}}=\frac{|\frac{d}{dt}(\omega(\dot{\gamma}(t)))|}
{\dot{\gamma}_1^2+\dot{\gamma}_2^2},~~if ~~\omega(\dot{\gamma}(t))= 0
~~and~~\frac{d}{dt}(\omega(\dot{\gamma}(t)))\neq 0.
\end{equation}
\end{lem}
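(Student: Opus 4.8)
The plan is to start from the closed-form expression for $k^{L,\nabla^2}_{\gamma}$ recorded in Lemma 4.2 and to pass to the limit $L\to+\infty$ in each of the three regimes separately; this parallels the proofs of Lemmas 2.7 and 3.4. To keep the bookkeeping light I would abbreviate $\theta:=\omega(\dot{\gamma}(t))$, $\dot{\theta}:=\frac{d}{dt}(\omega(\dot{\gamma}(t)))$, and write $D_L:=\dot{\gamma}_1^2+\dot{\gamma}_2^2+L\theta^2$ for the common denominator base, so that
\[
\left(k^{L,\nabla^2}_{\gamma}\right)^2=\frac{\ddot{\gamma}_1^2+\ddot{\gamma}_2^2+L\dot{\theta}^2}{D_L^2}-\frac{\bigl(\dot{\gamma}_1\ddot{\gamma}_1+\dot{\gamma}_2\ddot{\gamma}_2+L\theta\dot{\theta}\bigr)^2}{D_L^3}.
\]
The entire argument then reduces to tracking the leading power of $L$ in the numerator and denominator of each of the two summands, recalling throughout that the bracketed quantity is nonnegative since it is a squared curvature.

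For the first case $\theta\neq 0$, the base grows like $D_L\sim L\theta^2$, so $D_L^2\sim L^2\theta^4$ and $D_L^3\sim L^3\theta^6$. The first summand has numerator of order $L$, hence is $O(1/L)$, while the second summand has numerator of order $L^2$, hence is likewise $O(1/L)$. Both therefore tend to $0$, so the expression under the square root vanishes in the limit and I conclude $k^{\infty,\nabla^2}_{\gamma}=0$.

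For the second case $\theta=0$ and $\dot{\theta}=0$, every $L$-dependent term drops out and the expression becomes independent of $L$, namely $\dfrac{\ddot{\gamma}_1^2+\ddot{\gamma}_2^2}{(\dot{\gamma}_1^2+\dot{\gamma}_2^2)^2}-\dfrac{(\dot{\gamma}_1\ddot{\gamma}_1+\dot{\gamma}_2\ddot{\gamma}_2)^2}{(\dot{\gamma}_1^2+\dot{\gamma}_2^2)^3}$. Placing this over the common denominator $(\dot{\gamma}_1^2+\dot{\gamma}_2^2)^3$ and invoking the Lagrange identity $(\ddot{\gamma}_1^2+\ddot{\gamma}_2^2)(\dot{\gamma}_1^2+\dot{\gamma}_2^2)-(\dot{\gamma}_1\ddot{\gamma}_1+\dot{\gamma}_2\ddot{\gamma}_2)^2=(\ddot{\gamma}_1\dot{\gamma}_2-\ddot{\gamma}_2\dot{\gamma}_1)^2$ collapses the numerator to a perfect square, so that taking the square root reproduces the stated Euclidean curvature formula.

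For the third case $\theta=0$ but $\dot{\theta}\neq 0$, the base reduces to the $L$-independent quantity $D_L=\dot{\gamma}_1^2+\dot{\gamma}_2^2$, while the first summand acquires the term $\dfrac{L\dot{\theta}^2}{(\dot{\gamma}_1^2+\dot{\gamma}_2^2)^2}$, which grows linearly and dominates; the cross term stays $O(1)$. Hence $\left(k^{L,\nabla^2}_{\gamma}\right)^2\sim\dfrac{L\dot{\theta}^2}{(\dot{\gamma}_1^2+\dot{\gamma}_2^2)^2}$, and dividing by $L$ before extracting the square root yields the claimed value of $\lim_{L\to+\infty}k^{L,\nabla^2}_{\gamma}/\sqrt{L}$. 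The computation is routine throughout; the only step warranting care is the first case, where one must check that the two individually vanishing summands do not combine into a nonzero $O(1)$ remainder, but since each is separately $O(1/L)$ no cancellation analysis beyond leading order is required.
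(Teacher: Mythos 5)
Your proof is correct and follows essentially the route the paper intends: the paper gives no explicit argument for this lemma (it defers to ``similar to Lemma 2.7'', which in turn cites Lemma 2.6 of \cite{WW1}), and the intended proof is exactly this case-by-case asymptotic analysis of the squared curvature from the preceding lemma, with the Lagrange identity handling the horizontal case. The only slip is a harmless one of bookkeeping: the closed-form expression you start from is the first lemma of Section 4, not the statement being proved.
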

Similar to (3.10), we have
\begin{align}
\nabla^{2,\Sigma}_{\dot{\gamma}}\dot{\gamma}&=
\left\{\overline{q}\ddot{\gamma}_1
-\overline{p}\ddot{\gamma}_2
\right\}e_1
+\left\{\overline{r_L}~~\overline{p}\ddot{\gamma}_1
+\overline{r_L}~~\overline{q}\ddot{\gamma}_2
-\frac{l}{l_L}L^{\frac{1}{2}}\frac{d}{dt}(\omega(\dot{\gamma}(t)))\right\}e_2.\notag
\end{align}
Similar to Definitions 2.8 and 2.9, we can define
the geodesic curvature associated to $\nabla^2$, $k^{L,\nabla^2}_{\gamma,\Sigma}$ of $\gamma$ at $\gamma(t)$
and the intrinsic geodesic curvature associated to $\nabla^2$, $k_{\gamma,\Sigma}^{\infty,\nabla^2}$ of $\gamma$ at $\gamma(t)$.
Similar to Lemma 2.10, we have
\begin{lem}
Let $\Sigma\subset(\mathbb{H},g_L)$ be a regular surface.
Let $\gamma:[a,b]\rightarrow \Sigma$ be a Euclidean $C^2$-smooth regular curve. Then
\begin{equation}
k_{\gamma,\Sigma}^{\infty,\nabla^2}=0,~~if ~~\omega(\dot{\gamma}(t))\neq 0,
\end{equation}
$$k^{\infty,\nabla^2}_{\gamma,\Sigma}=0
~~if ~~\omega(\dot{\gamma}(t))= 0, ~~and~~\frac{d}{dt}(\omega(\dot{\gamma}(t)))=0,$$
\begin{equation}
{\rm lim}_{L\rightarrow +\infty}\frac{k_{\gamma,\Sigma}^{L,\nabla^2}}{\sqrt{L}}=\frac{|\frac{d}{dt}(\omega(\dot{\gamma}(t)))|}
{\left(\overline{q}{\dot{\gamma}_1}-\overline{p}\dot{\gamma}_2\right)^2},~~if ~~\omega(\dot{\gamma}(t))= 0
~~and~~\frac{d}{dt}(\omega(\dot{\gamma}(t)))\neq 0.
\end{equation}
\end{lem}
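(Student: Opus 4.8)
The plan is to mirror the proof of Lemma 2.10, replacing $\nabla$ by the adapted connection $\nabla^2$ throughout. First I would read off from the displayed formula for $\nabla^{2,\Sigma}_{\dot{\gamma}}\dot{\gamma}$ preceding the statement the two orthonormal coefficients
\[
\alpha:=\overline{q}\ddot{\gamma}_1-\overline{p}\ddot{\gamma}_2,\qquad
\beta:=\overline{r_L}(\overline{p}\ddot{\gamma}_1+\overline{q}\ddot{\gamma}_2)-\tfrac{l}{l_L}L^{\frac{1}{2}}\tfrac{d}{dt}(\omega(\dot{\gamma})),
\]
so that $\|\nabla^{2,\Sigma}_{\dot{\gamma}}\dot{\gamma}\|_{\Sigma,L}^2=\alpha^2+\beta^2$ since $e_1,e_2$ are $g_L$-orthonormal. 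Because $\dot{\gamma}\in T\Sigma$ and $\pi$ is the orthogonal projection, one has $\langle\nabla^{2,\Sigma}_{\dot{\gamma}}\dot{\gamma},\dot{\gamma}\rangle_{\Sigma,L}=\langle\nabla^2_{\dot{\gamma}}\dot{\gamma},\dot{\gamma}\rangle_L=\dot{\gamma}_1\ddot{\gamma}_1+\dot{\gamma}_2\ddot{\gamma}_2+L\,\omega(\dot{\gamma})\tfrac{d}{dt}(\omega(\dot{\gamma}))$ by (4.1) and $\langle X_3,X_3\rangle_L=L$, while $\|\dot{\gamma}\|_{\Sigma,L}^2=\dot{\gamma}_1^2+\dot{\gamma}_2^2+L(\omega(\dot{\gamma}))^2$. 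Substituting these three quantities into the $\nabla^2$-analogue of Definition 2.8 produces a closed, purely algebraic expression for $k^{L,\nabla^2}_{\gamma,\Sigma}$ as a function of $L$, which is the object whose $L\to+\infty$ behaviour must be analysed.

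The core of the argument is then the asymptotics, which I would organize via the elementary limits $r=L^{-\frac12}X_3u$, hence $\overline{r_L}=r/l_L\sim (X_3u)/(l\sqrt{L})\to0$ and $l/l_L\to1$, with $\overline{p},\overline{q},l$ independent of $L$. I would split into the three cases of the statement. The delicate case is $\omega(\dot{\gamma})\neq0$: here $\|\dot{\gamma}\|_{\Sigma,L}^2\sim L\omega(\dot{\gamma})^2$, $\beta\sim-\sqrt{L}\tfrac{d}{dt}(\omega(\dot{\gamma}))$, and $\langle\nabla^2_{\dot{\gamma}}\dot{\gamma},\dot{\gamma}\rangle_L\sim L\,\omega(\dot{\gamma})\tfrac{d}{dt}(\omega(\dot{\gamma}))$, so that the two terms
\[
\frac{\alpha^2+\beta^2}{\|\dot{\gamma}\|_{\Sigma,L}^4}\quad\text{and}\quad \frac{\langle\nabla^2_{\dot{\gamma}}\dot{\gamma},\dot{\gamma}\rangle_L^2}{\|\dot{\gamma}\|_{\Sigma,L}^6}
\]
each behave like $\big(\tfrac{d}{dt}\omega(\dot{\gamma})\big)^2/\!\big(L\,\omega(\dot{\gamma})^4\big)$ to leading order. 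The point is that these leading $1/L$ contributions cancel, leaving a difference that is $o(1/L)$; taking the square root gives $k^{L,\nabla^2}_{\gamma,\Sigma}=o(1/\sqrt{L})\to0$, which is the stated value $k^{\infty,\nabla^2}_{\gamma,\Sigma}=0$. Geometrically this cancellation expresses that, as $L\to+\infty$, both velocity and acceleration become dominated by the $X_3$-direction, so the perpendicular component measured by the geodesic curvature is of strictly lower order.

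For the remaining two cases I would use that $\omega(\dot{\gamma})=0$ makes $\|\dot{\gamma}\|_{\Sigma,L}^2=\dot{\gamma}_1^2+\dot{\gamma}_2^2$ independent of $L$, together with the tangency relation $p\dot{\gamma}_1+q\dot{\gamma}_2=-\omega(\dot{\gamma})X_3u$ (from $u(\gamma)\equiv0$), which for $\omega(\dot{\gamma})=0$ gives $\overline{p}\dot{\gamma}_1+\overline{q}\dot{\gamma}_2=0$ and hence the identity $\dot{\gamma}_1^2+\dot{\gamma}_2^2=(\overline{q}\dot{\gamma}_1-\overline{p}\dot{\gamma}_2)^2$. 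When in addition $\tfrac{d}{dt}(\omega(\dot{\gamma}))=0$, both $\beta=\overline{r_L}(\overline{p}\ddot{\gamma}_1+\overline{q}\ddot{\gamma}_2)=O(L^{-1/2})$ and $\langle\nabla^2_{\dot{\gamma}}\dot{\gamma},\dot{\gamma}\rangle_L=O(1)$ stay bounded against a positive constant denominator, forcing $k^{L,\nabla^2}_{\gamma,\Sigma}=O(L^{-1/2})\to0$. When instead $\tfrac{d}{dt}(\omega(\dot{\gamma}))\neq0$, the only divergent quantity is $\beta\sim-\sqrt{L}\tfrac{d}{dt}(\omega(\dot{\gamma}))$, so the first term grows like $L\big(\tfrac{d}{dt}\omega(\dot{\gamma})\big)^2/(\dot{\gamma}_1^2+\dot{\gamma}_2^2)^2$ while the second stays bounded; extracting $\sqrt{L}$ and applying the tangency identity yields
\[
\lim_{L\to+\infty}\frac{k^{L,\nabla^2}_{\gamma,\Sigma}}{\sqrt{L}}=\frac{|\tfrac{d}{dt}(\omega(\dot{\gamma}))|}{\dot{\gamma}_1^2+\dot{\gamma}_2^2}=\frac{|\tfrac{d}{dt}(\omega(\dot{\gamma}))|}{(\overline{q}\dot{\gamma}_1-\overline{p}\dot{\gamma}_2)^2},
\]
as claimed. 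I expect the only genuine obstacle to be the leading-order cancellation in the first case; the other two cases are immediate once the orders of $\alpha,\beta,\|\dot{\gamma}\|_{\Sigma,L}$ and the tangency identity are in hand.
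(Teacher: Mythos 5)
Your proposal is correct and follows the route the paper itself intends: the paper gives no written proof of this lemma (it defers to Lemma 2.10, which in turn defers to Lemma 3.3 of \cite{WW1}), and the intended argument is exactly your direct asymptotic expansion of $k^{L,\nabla^2}_{\gamma,\Sigma}$ using the displayed components of $\nabla^{2,\Sigma}_{\dot{\gamma}}\dot{\gamma}$, the identity $\langle\nabla^{2,\Sigma}_{\dot{\gamma}}\dot{\gamma},\dot{\gamma}\rangle_{\Sigma,L}=\langle\nabla^{2}_{\dot{\gamma}}\dot{\gamma},\dot{\gamma}\rangle_{L}$, and the tangency relation $\overline{p}\dot{\gamma}_1+\overline{q}\dot{\gamma}_2=0$ at points where $\omega(\dot{\gamma})=0$. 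Your treatment of the first and third cases is sound; in particular the leading-order cancellation you identify in the case $\omega(\dot{\gamma})\neq 0$ does occur (the difference is in fact $O(L^{-2})$, so $k^{L,\nabla^2}_{\gamma,\Sigma}=O(L^{-1})\to 0$). The one step whose justification does not match its conclusion is the case $\omega(\dot{\gamma})=0$, $\tfrac{d}{dt}(\omega(\dot{\gamma}))=0$: boundedness of $\alpha$, $\beta$ and $\langle\nabla^{2}_{\dot{\gamma}}\dot{\gamma},\dot{\gamma}\rangle_{L}$ against the constant denominator only yields $k^{L,\nabla^2}_{\gamma,\Sigma}=O(1)$, with limit $\bigl(\alpha^2/c^2-D^2/c^3\bigr)^{1/2}$ where $c=\dot{\gamma}_1^2+\dot{\gamma}_2^2$ and $D=\dot{\gamma}_1\ddot{\gamma}_1+\dot{\gamma}_2\ddot{\gamma}_2$. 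To get the stated vanishing (and your claimed rate $O(L^{-1/2})$) you must actually deploy the tangency identity you derived: writing $\dot{\gamma}_1=\lambda\overline{q}$, $\dot{\gamma}_2=-\lambda\overline{p}$ with $\lambda=\overline{q}\dot{\gamma}_1-\overline{p}\dot{\gamma}_2$ gives $D=\lambda\alpha$ and $c=\lambda^2$, hence $D^2/c^3=\alpha^2/c^2$ exactly and $k^{L,\nabla^2}_{\gamma,\Sigma}=|\beta|/c=O(L^{-1/2})$. This is a one-line fix with ingredients you already have on the page, but as written that sentence is a non sequitur.
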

\indent Similar to the definitions 2.11 and 2.12, we can define
the signed geodesic curvature associated to $\nabla^2$, $k^{L,\nabla^2,s}_{\gamma,\Sigma}$ of $\gamma$ at $\gamma(t)$ and
the intrinsic signed geodesic curvature associated to $\nabla^2$, $k_{\gamma,\Sigma}^{\infty,\nabla^2,s}$ of $\gamma$.
Similar to Lemma 2.13, we have
\begin{lem}
Let $\Sigma\subset(\mathbb{H},g_L)$ be a regular surface.
Let $\gamma:[a,b]\rightarrow \Sigma$ be a Euclidean $C^2$-smooth regular curve. Then
\begin{equation}
k_{\gamma,\Sigma}^{\infty,\nabla^2,s}=0,~~if ~~\omega(\dot{\gamma}(t))\neq 0,
\end{equation}
$$k^{\infty,\nabla^2,s}_{\gamma,\Sigma}=0
~~if ~~\omega(\dot{\gamma}(t))= 0, ~~and~~\frac{d}{dt}(\omega(\dot{\gamma}(t)))=0,$$
\begin{equation}
{\rm lim}_{L\rightarrow +\infty}\frac{k_{\gamma,\Sigma}^{L,\nabla^2,s}}{\sqrt{L}}=
\frac{(-\overline{q}{\dot{\gamma}_1}
+\overline{p}\dot{\gamma}_2)\frac{d}{dt}(\omega(\dot{\gamma}(t)))}
{|\overline{q}{\dot{\gamma}_1}-\overline{p}\dot{\gamma}_2|^3},
\end{equation}
$$~~if ~~\omega(\dot{\gamma}(t))= 0
~~and~~\frac{d}{dt}(\omega(\dot{\gamma}(t)))\neq 0.
$$
\end{lem}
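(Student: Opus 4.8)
The plan is to follow the proof of Lemma 2.13 line by line, feeding in the adapted data in place of the Schouten--Van Kampen data. By the definition of the signed geodesic curvature associated to $\nabla^2$ (analogous to Definition 2.11),
$$k^{L,\nabla^2,s}_{\gamma,\Sigma}=\frac{\langle \nabla^{2,\Sigma}_{\dot{\gamma}}\dot{\gamma},J_L(\dot{\gamma})\rangle_{\Sigma,L}}{||\dot{\gamma}||^3_{\Sigma,L}},$$
so the first task is to write everything in the moving frame $\{e_1,e_2\}$. Decomposing $\dot{\gamma}=\langle\dot{\gamma},e_1\rangle_L e_1+\langle\dot{\gamma},e_2\rangle_L e_2$ and using (2.16) gives $\langle\dot{\gamma},e_1\rangle_L=\overline{q}\dot{\gamma}_1-\overline{p}\dot{\gamma}_2$ and $\langle\dot{\gamma},e_2\rangle_L=\overline{r_L}(\overline{p}\dot{\gamma}_1+\overline{q}\dot{\gamma}_2)-\frac{l}{l_L}L^{1/2}\omega(\dot{\gamma}(t))$; since by (2.17) $J_L(e_1)=e_2$ and $J_L(e_2)=-e_1$, this yields $J_L(\dot{\gamma})=-\langle\dot{\gamma},e_2\rangle_L e_1+\langle\dot{\gamma},e_1\rangle_L e_2$. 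Pairing this against the expression for $\nabla^{2,\Sigma}_{\dot{\gamma}}\dot{\gamma}$ displayed just before Lemma 4.3 turns the numerator into an explicit function of $\gamma$, of $L$, and of the frame coefficients.

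The key analytic inputs are the asymptotics as $L\to+\infty$: since $r=\widetilde{X_3}u=L^{-1/2}X_3u$, we have $r\to 0$, $l_L\to l$, $\frac{l}{l_L}\to 1$ and $\overline{r_L}=O(L^{-1/2})$. The other essential ingredient, which I would extract from the tangency condition $\langle\dot{\gamma},v_L\rangle_L=0$, is the relation $p\dot{\gamma}_1+q\dot{\gamma}_2+(X_3u)\,\omega(\dot{\gamma}(t))=0$; dividing by $l$ this reads $\overline{p}\dot{\gamma}_1+\overline{q}\dot{\gamma}_2=-\tfrac{X_3u}{l}\omega(\dot{\gamma}(t))$. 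In particular, when $\omega(\dot{\gamma}(t))=0$ it forces $\overline{p}\dot{\gamma}_1+\overline{q}\dot{\gamma}_2=0$, hence $\langle\dot{\gamma},e_2\rangle_L=0$ identically and $(\overline{q}\dot{\gamma}_1-\overline{p}\dot{\gamma}_2)^2=\dot{\gamma}_1^2+\dot{\gamma}_2^2$.

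With these in hand the three cases are routine. When $\omega(\dot{\gamma}(t))\neq 0$ the denominator $||\dot{\gamma}||^3_{\Sigma,L}\sim L^{3/2}|\omega(\dot{\gamma}(t))|^3$ grows like $L^{3/2}$ while the numerator is only $O(L^{1/2})$, so $k^{L,\nabla^2,s}_{\gamma,\Sigma}=O(L^{-1})\to 0$. When $\omega(\dot{\gamma}(t))=\frac{d}{dt}\omega(\dot{\gamma}(t))=0$, the vanishing of $\langle\dot{\gamma},e_2\rangle_L$ and the $O(L^{-1/2})$ decay of the surviving numerator term again give limit $0$. In the remaining case $\omega(\dot{\gamma}(t))=0$, $\frac{d}{dt}\omega(\dot{\gamma}(t))\neq0$, the numerator reduces to $\langle\nabla^{2,\Sigma}_{\dot{\gamma}}\dot{\gamma},e_2\rangle_L\,\langle\dot{\gamma},e_1\rangle_L$, whose leading term is $-\frac{l}{l_L}L^{1/2}\frac{d}{dt}\omega(\dot{\gamma}(t))\,(\overline{q}\dot{\gamma}_1-\overline{p}\dot{\gamma}_2)$; dividing by $\sqrt{L}\,||\dot{\gamma}||^3_{\Sigma,L}$ and using $(\overline{q}\dot{\gamma}_1-\overline{p}\dot{\gamma}_2)^2=\dot{\gamma}_1^2+\dot{\gamma}_2^2$ to convert $(\dot{\gamma}_1^2+\dot{\gamma}_2^2)^{3/2}$ into $|\overline{q}\dot{\gamma}_1-\overline{p}\dot{\gamma}_2|^3$ produces exactly the stated formula.

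The computation is entirely elementary; the only points requiring care are the bookkeeping of orders in $L$ in the numerator, where one must confirm that the $\overline{r_L}$-weighted terms are genuinely lower order than the $\frac{l}{l_L}L^{1/2}$ term, and the clean use of the tangency identity both to annihilate $\langle\dot{\gamma},e_2\rangle_L$ and to rewrite the denominator in the third case. I expect this last substitution---matching $(\dot{\gamma}_1^2+\dot{\gamma}_2^2)^{3/2}$ with $|\overline{q}\dot{\gamma}_1-\overline{p}\dot{\gamma}_2|^3$---to be the only step where the geometry, rather than pure algebra, is doing the work.
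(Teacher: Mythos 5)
Your argument is correct and is exactly the computation the paper has in mind: the paper gives no explicit proof of this lemma, deferring to Lemma 2.13 (itself deferred to Lemma 3.6 of \cite{WW1}), and that reference argument is precisely your frame decomposition of $\nabla^{2,\Sigma}_{\dot{\gamma}}\dot{\gamma}$ and $J_L(\dot{\gamma})$ in $\{e_1,e_2\}$, the tangency identity $p\dot{\gamma}_1+q\dot{\gamma}_2+(X_3u)\,\omega(\dot{\gamma}(t))=0$ killing $\langle\dot{\gamma},e_2\rangle_L$ when $\omega(\dot{\gamma}(t))=0$, and the order-in-$L$ bookkeeping in each of the three cases. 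All three limits you obtain match the stated formulas.
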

\indent Similar to (2.26), we can define the {\it second fundamental form associated to $\nabla^2$}, $II^{\nabla^2,L}$ of the
embedding of $\Sigma$ into $(\mathbb{H},g_L)$.
Similarly to Theorem 2.14, we have
\vskip 0.5 true cm
\begin{thm} The second fundamental form $II^{\nabla^2,L}$ of the
embedding of $\Sigma$ into $(\mathbb{H},g_L)$ is given by
\begin{equation}
II^{\nabla^2,L}=\left(
  \begin{array}{cc}
   h^2_{11}, & h^2_{12}\\
   h^2_{21}, & h^2_{22}\\
  \end{array}
\right),
\end{equation}
where $$h^2_{11}= \frac{l}{l_L}[X_1(\overline{p})+X_2(\overline{q})],~~h^2_{12}=-\frac{l_L}{l}\langle e_1,\nabla_H(\overline{r_L})\rangle_L
    ,$$
    $$h^2_{21}=-\frac{l_L}{l}\langle e_1,\nabla_H(\overline{r_L})\rangle_L-\frac{\sqrt{L}}{2}
    +\frac{\sqrt{L}}{2}\frac{l^2}{l_L^2}
    -\frac{\sqrt{L}}{2}{r_L}^2,$$
   $$h^1_{22}=-\frac{l^2}{l_L^2}\langle e_2,\nabla_H(\frac{r}{l})\rangle_L+\widetilde{X_3}(\overline{r_L})
    .$$
\end{thm}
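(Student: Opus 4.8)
The plan is to mirror the proof of Theorem 2.14, comparing the adapted connection $\nabla^2$ with the Levi-Civita connection $\nabla^L$. Since the entries of $II^{\nabla^2,L}$ are $h^2_{ij}=\langle\nabla^2_{e_i}v_L,e_j\rangle_L$ and the Riemannian entries $\langle\nabla^L_{e_i}v_L,e_j\rangle_L$ are already supplied by Theorem 4.3 in \cite{CDPT}, it is enough to evaluate the four correction terms $\langle(\nabla^2-\nabla^L)_{e_i}v_L,e_j\rangle_L$, $i,j\in\{1,2\}$, and add them to the known Riemannian form.

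First I would observe that the difference $D(X,Y):=\nabla^2_XY-\nabla^L_XY$ is tensorial in both slots, hence completely determined by its values on the orthonormal frame $\{X_1,X_2,\widetilde{X_3}\}$. Because $\nabla^2$ is defined by $\nabla^2_{X_j}X_k=0$, we have $D(Y_a,Y_b)=-\nabla^L_{Y_a}Y_b$, and therefore $D(X,Y)=-\sum_{a,b}X^aY^b\nabla^L_{Y_a}Y_b$, where the nonzero values in this frame come from Lemma 2.1 after rescaling $X_3=\sqrt{L}\,\widetilde{X_3}$, namely $\nabla^L_{X_1}X_2=\frac{\sqrt{L}}{2}\widetilde{X_3}$, $\nabla^L_{X_2}X_1=-\frac{\sqrt{L}}{2}\widetilde{X_3}$, $\nabla^L_{X_1}\widetilde{X_3}=\nabla^L_{\widetilde{X_3}}X_1=-\frac{\sqrt{L}}{2}X_2$, and $\nabla^L_{X_2}\widetilde{X_3}=\nabla^L_{\widetilde{X_3}}X_2=\frac{\sqrt{L}}{2}X_1$. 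Plugging the explicit frame expansions (2.16) of $e_1,e_2,v_L$ into this formula turns each correction into an algebraic expression in $\overline{p},\overline{q},\overline{p_L},\overline{q_L},\overline{r_L}$ and $l/l_L$.

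The evaluation then rests on the normalizations $\overline{p}^2+\overline{q}^2=1$, $\overline{p}\,\overline{p_L}+\overline{q}\,\overline{q_L}=l/l_L$, $\overline{p}\,\overline{q_L}-\overline{q}\,\overline{p_L}=0$, and $\overline{r_L}^2+l^2/l_L^2=1$. I expect to find that both diagonal corrections vanish, that $\langle D(e_1,v_L),e_2\rangle_L=\frac{\sqrt{L}}{2}$, and that the single off-diagonal correction is $\langle D(e_2,v_L),e_1\rangle_L=-\frac{\sqrt{L}}{2}\overline{r_L}^2+\frac{\sqrt{L}}{2}\frac{l^2}{l_L^2}$. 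Adding these to the Riemannian entries of Theorem 4.3 in \cite{CDPT} reproduces exactly $h^2_{11}$, $h^2_{12}$, $h^2_{21}$ and $h^2_{22}$; in particular the extra $\frac{\sqrt{L}}{2}$ in $\langle D(e_1,v_L),e_2\rangle_L$ cancels the $-\frac{\sqrt{L}}{2}$ present in the Riemannian $(1,2)$-entry, leaving $h^2_{12}=-\frac{l_L}{l}\langle e_1,\nabla_H(\overline{r_L})\rangle_L$.

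The main obstacle is purely computational and sits in the $(2,1)$ entry, where all three nonzero values of $\nabla^L_{Y_a}Y_b$ contribute simultaneously; the expression collapses to the stated form only after the identities $\overline{p}^2+\overline{q}^2=1$ and $\overline{p}\,\overline{p_L}+\overline{q}\,\overline{q_L}=l/l_L$ are used to cancel cross terms. By contrast, the vanishing of the two diagonal corrections, and of the $\widetilde{X_3}$-component of $D(e_2,v_L)$ that would otherwise feed the $(2,2)$ entry, both follow at once from the antisymmetry relation $\overline{p}\,\overline{q_L}-\overline{q}\,\overline{p_L}=0$.
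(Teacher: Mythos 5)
Your proposal is correct and follows essentially the same route as the paper's (implicit) proof, which likewise obtains $h^2_{ij}$ by adding the correction terms $\langle(\nabla^2-\nabla^L)_{e_i}v_L,e_j\rangle_L$ to the Riemannian second fundamental form of Theorem 4.3 in \cite{CDPT}, exactly as in the proof of Theorem 2.14. Your computed corrections ($0$ on the diagonal, $+\tfrac{\sqrt{L}}{2}$ in the $(1,2)$ slot, and $-\tfrac{\sqrt{L}}{2}\overline{r_L}^2+\tfrac{\sqrt{L}}{2}\tfrac{l^2}{l_L^2}$ in the $(2,1)$ slot) reproduce the stated entries.
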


Similar to Propositions 2.15 and 2.17, we have
\begin{prop} Away from characteristic points, we have
\begin{equation}
\mathcal{H}_{\nabla^2,\infty}={\rm lim}_{L\rightarrow +\infty}\mathcal{H}_{\nabla^2,L}=X_1(\overline{p})+X_2(\overline{q}).
\end{equation}
\begin{equation}
\mathcal{K}^{\Sigma,\nabla^2}(e_1,e_2)\rightarrow \mathcal{K}^{\Sigma,\nabla^2,\infty}=0,~~{\rm as}~~L\rightarrow +\infty.
\end{equation}
\end{prop}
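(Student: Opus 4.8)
The plan is to treat the two assertions separately, each by inserting the explicit entries of $II^{\nabla^2,L}$ computed in Theorem 4.5 and then letting $L\to+\infty$. Throughout I would rely on the single elementary fact that $r=\widetilde{X_3}u=L^{-1/2}X_3(u)$, so that on any compact set away from $C(\Sigma)$ we have $r\to 0$, $l_L=\sqrt{p^2+q^2+r^2}\to l=\sqrt{p^2+q^2}$, $l/l_L\to 1$, and $\overline{r_L}=r/l_L=O(L^{-1/2})\to 0$ as $L\to+\infty$.

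For the mean curvature I would write $\mathcal{H}_{\nabla^2,L}={\rm tr}(II^{\nabla^2,L})=h^2_{11}+h^2_{22}$. By Theorem 4.5, $h^2_{11}=\frac{l}{l_L}[X_1(\overline p)+X_2(\overline q)]\to X_1(\overline p)+X_2(\overline q)$ since $l/l_L\to 1$. For $h^2_{22}=-\frac{l^2}{l_L^2}\langle e_2,\nabla_H(\frac{r}{l})\rangle_L+\widetilde{X_3}(\overline{r_L})$ I would note that $\nabla_H(r/l)=L^{-1/2}\nabla_H(X_3(u)/l)$ is horizontal and $O(L^{-1/2})$, while by (2.16) the horizontal part of $e_2$ equals $\overline{r_L}(\overline p X_1+\overline q X_2)=O(L^{-1/2})$, so the inner product is $O(L^{-1})$; the remaining term $\widetilde{X_3}(\overline{r_L})=L^{-1/2}X_3(\overline{r_L})$ is $O(L^{-1/2})$. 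Hence $h^2_{22}\to 0$ and the first displayed equality follows.

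For the Gaussian curvature I would invoke the Gauss equation $\mathcal{K}^{\Sigma,\nabla^2}(e_1,e_2)=\mathcal{K}^{\nabla^2}(e_1,e_2)+{\rm det}(II^{\nabla^2,L})$, which holds because $\nabla^2$ is a metric connection. First I would observe that $\nabla^2$ is flat: since $\nabla^2_{X_j}X_k=0$ for all $j,k$, both iterated derivatives vanish, and because $[X_i,X_j]$ is a constant-coefficient combination of the $X_m$ we also get $\nabla^2_{[X_i,X_j]}X_k=0$; thus $R^2(X_i,X_j)X_k=0$ for all $i,j,k$ and $\mathcal{K}^{\nabla^2}(e_1,e_2)=0$ identically. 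It then remains to prove ${\rm det}(II^{\nabla^2,L})=h^2_{11}h^2_{22}-h^2_{12}h^2_{21}\to 0$. Since $h^2_{11}$ is bounded with $h^2_{22}\to 0$, the first product tends to $0$; and $h^2_{12}=-\frac{l_L}{l}\langle e_1,\nabla_H(\overline{r_L})\rangle_L=O(L^{-1/2})$ because $\overline{r_L}$ carries the factor $L^{-1/2}$.

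The hard part is the entry $h^2_{21}$, which a priori diverges because of its explicit $\sqrt L$ terms, so I would exploit the algebraic cancellation $-1+\frac{l^2}{l_L^2}-\overline{r_L}^2=-2\overline{r_L}^2$, a direct consequence of $l^2+r^2=l_L^2$. This collapses the three $\sqrt L$ contributions to $h^2_{21}=-\frac{l_L}{l}\langle e_1,\nabla_H(\overline{r_L})\rangle_L-\sqrt L\,\overline{r_L}^2$, and since $\overline{r_L}^2=r^2/l_L^2=L^{-1}(X_3u)^2/l_L^2$ the surviving term is $\sqrt L\,\overline{r_L}^2=L^{-1/2}(X_3u)^2/l_L^2=O(L^{-1/2})$. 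Hence $h^2_{21}=O(L^{-1/2})$ too, so $h^2_{12}h^2_{21}=O(L^{-1})\to 0$, giving ${\rm det}(II^{\nabla^2,L})\to 0$ and therefore $\mathcal{K}^{\Sigma,\nabla^2}(e_1,e_2)\to 0$, i.e. $\mathcal{K}^{\Sigma,\nabla^2,\infty}=0$. This parallels Propositions 2.15 and 2.17, the only genuinely new point being that here the $\sqrt L$ terms cancel rather than survive, which is precisely why the limiting Gaussian curvature vanishes for the adapted connection.
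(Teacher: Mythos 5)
Your proof is correct and follows essentially the same route the paper intends: insert the entries of $II^{\nabla^2,L}$ from Theorem 4.5, use the Gauss equation together with the flatness $R^2(X_i,X_j)X_k=0$ of the adapted connection, and let $L\to+\infty$, exactly as in Propositions 2.15 and 2.17. In fact you supply the one computation the paper leaves implicit, namely the cancellation $-1+\frac{l^2}{l_L^2}-\overline{r_L}^2=-2\overline{r_L}^2$ that tames the $\sqrt{L}$ terms in $h^2_{21}$ and forces $\det(II^{\nabla^2,L})\to 0$.
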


\vskip 1 true cm

\section{Acknowledgements}

The author was supported in part by NSFC No.11771070.

\vskip 1 true cm


\bigskip

\noindent {\footnotesize {\it Yong Wang} \\
{School of Mathematics and Statistics, Northeast Normal University, Changchun 130024, China}\\
{Email: wangy581@nenu.edu.cn}

\end{document}